\def\bfy{{\mathbf y}}
\def\bfx{{\mathbf x}}
\newcommand{\mmod}[1]{\,\,(\text{\rm mod}\,\, #1)}
\def\bfalpha{{\boldsymbol \alpha}}
\def\bfbeta{{\boldsymbol \beta}}
\newtheorem{thm}{Theorem}
\newtheorem{cor}{Corollary}
\newtheorem{lem}{Lemma}
\newtheorem{prop}{Proposition}
\numberwithin{equation}{section} \numberwithin{thm}{section}
\numberwithin{lem}{section} \numberwithin{problem}{section}
\numberwithin{cor}{section}
\def\grm{{\mathfrak m}}\def\grM{{\mathfrak M}}\def\grN{{\mathfrak N}}\def\grn{{\mathfrak n}}\def\grp{{\mathfrak p}}\def\grP{{\mathfrak P}}\def\grw{{\mathfrak w}}\def\grW{{\mathfrak W}}
\begin{document}
\title[Sums of three cubes]{On Waring's problem in sums of three cubes}
\author[Javier Pliego]{Javier Pliego}
\address{Purdue Mathematical Science Building, 150 N University St, West Lafayette, IN 47907, United States of America}

\email{jp17412@bristol.ac.uk}
\subjclass[2010]{11P05, 11P55}
\keywords{Waring's problem, Hardy-Littlewood method.}

\begin{abstract} We investigate the asymptotic formula for the number of representations of a large positive integer as a sum of $k$-th powers of integers represented as the sums of three positive cubes, counted with multiplicities. We also obtain a lower bound for the number of representations when the sums of three cubes are counted without multiplicities.
\end{abstract}
\maketitle

\section{Introduction} 
It is widely believed, but still unknown, that the set of integers $\mathscr{C}$ represented as a sum of three positive integral cubes has positive density. Hardy and Littlewood \cite{Har} first announced what is known as the Hypothesis-$K$, which asserts that for each $\varepsilon>0$, the number of representations $r_{k}(n)$ of $n$ as a sum of $k$ positive integral $k$-th powers is $O(n^{\varepsilon}).$ Although this conjecture is known to be false when $k=3$ (see Mahler \cite{Mah}), the weaker claim that \begin{equation}\label{ec1.1}\sum_{n\leq X}r_{k}(n)^{2}\ll X^{1+\varepsilon},\end{equation} known as Hypothesis $K^{*}$ (see \cite{Hol2}), would allow one to show, through a standard Cauchy-Schwarz argument, that $\mathcal{N}(X)=\lvert \mathscr{C}\cap [1,X]\rvert\gg X^{1-\varepsilon}.$ In fact, under some unproved assumptions on the zeros of some Hasse-Weil $L$-functions, Hooley (\cite{Hol1}, \cite{Hol2}) and Heath-Brown \cite{Hea} showed using different procedures that (\ref{ec1.1}) holds for $k=3$. Nevertheless, some unconditional progress has been made on strengthening  lower bounds for $\mathcal{N}(X).$ By using methods of diminishing ranges, Davenport \cite{Dav2} obtained the bound $\mathcal{N}(X)\gg X^{47/54-\varepsilon}.$
Later on, Vaughan improved it to $\mathcal{N}(X)\gg X^{11/12-\varepsilon}$ by introducing smooth numbers in his ``new iterative method'' \cite{Vau3}, and Wooley, extending the method to obtain non-trivial bounds for fractional moments of smooth Weyl sums, improved the estimate in a series of papers (\cite{Woo1}, \cite{Woo2}, \cite{Woo3}), the best current one being $\mathcal{N}(X)\gg X^{\beta}$, where $\beta=0.91709477.$

A vast number of results can be found in the literature on problems involving equations over special subsets of the integers. The Green-Tao Theorem \cite{G-T}, which proves the existence of arbitrarily long arithmetic progressions over the primes is an example of such problems when the special set is the set of prime numbers. Other instances where the set $\mathscr{C}$ is involved include some correlation estimates for sums of three cubes by Br\"udern and Wooley \cite{B-W}, and lower bounds of the shape $N_{3}(\mathscr{C},X)\gg X^{5/2-\varepsilon},$ by Balog and Br\"udern \cite{B-B}. The parameter $N_{3}(\mathscr{C},X)$ here denotes the number of triples with entries in $\mathscr{C}\cap [1,X]$ whose entries averages lie on $\mathscr{C}$ as well. 

In this paper we investigate the asymptotic formula for Waring's problem when the set of $k$-th powers of integers is replaced by the set of $k$-th powers of elements of $\mathscr{C}$, but before stating the main result that we obtain here it is convenient to introduce some notation. Let $k\geq 2$ and $n\in\mathbb{N}$. Take $P=n^{1/3k}.$ For every vector $\mathbf{v}\in\mathbb{R}^{n}$ and parameters $a,b\in\mathbb{R}$ we will write $a\leq \mathbf{v}\leq b$ to denote that $a\leq v_{i}\leq b$ for $1\leq i\leq n$. We take the function $T(\mathbf{x})=x_{1}^{3}+x_{2}^{3}+x_{3}^{3},$ and consider the weights $$r_{3}(x)={\rm card}\Big\{\bfx\in\mathbb{N}^{3}:\ x=T(\bfx),\ \ \mathbf{x}\leq P\Big\}$$ and the set $$\mathcal{X}_{n}=\Big\{(x_{1},\ldots,x_{s})\in\mathscr{C}^{s},\ \ \ \ n=\sum_{i=1}^{s}x_{i}^{k}\Big\}.$$ Define the functions \begin{equation}\label{Rs}R(n)=\sum_{\mathbf{X}\in\mathcal{X}_{n}}r_{3}(x_{1})\cdots r_{3}(x_{s}),\ \ \ \ \ \ \ \ \ \ \ \ \  r(n)=\sum_{\mathbf{X}\in\mathcal{X}_{n}}1,\end{equation} which count the number of representations of $n$ as a sum of $k$-th powers of integers represented as sums of three positive cubes, counted with and without multiplicities respectively. Take the singular series associated to the problem, defined as
\begin{equation}\label{1A}\mathfrak{S}(n)=\sum_{q=1}^{\infty}\sum_{\substack{a=1\\ (a,q)=1}}^{q}\Big(q^{-3}\sum_{1\leq \mathbf{r}\leq q}e\big(aT(\mathbf{r})^{k}/q\big)\Big)^{s}e\big(-an/q\big).\end{equation}
The main result of this paper establishes an asymptotic formula for $R(n)$. For such purpose, it is convenient to introduce the parameter $H(k)=9k^{2}-k+2.$
\begin{thm}\label{thm9.1}
Let $s\geq H(k).$ Then, there exists a constant $\delta>0$ such that
\begin{equation*}R(n)=\Gamma\big(4/3\big)^{3s}\Gamma\big(1+1/k\big)^{s}\Gamma\big(s/k\big)^{-1}\mathfrak{S}(n)n^{s/k-1}+O(n^{s/k-1-\delta}),\end{equation*}where the singular series satisfies $\mathfrak{S}(n)\gg 1.$ 
\end{thm}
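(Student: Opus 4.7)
The plan is to establish the asymptotic formula by the Hardy-Littlewood circle method, applied to the generating exponential sum
\begin{equation*}
f(\alpha)=\sum_{1\leq \bfx\leq P}e\bigl(\alpha T(\bfx)^{k}\bigr),
\end{equation*}
for which $R(n)=\int_{0}^{1}f(\alpha)^{s}e(-\alpha n)\,d\alpha$. Fix a small parameter $\tau>0$, set $Q=P^{\tau}$, and introduce the standard major arcs
\begin{equation*}
\grM(Q)=\bigcup_{q\leq Q}\bigcup_{\substack{1\leq a\leq q\\ (a,q)=1}}\bigl\{\alpha\in[0,1):|\alpha-a/q|\leq Q/n\bigr\},
\end{equation*}
together with minor arcs $\grm(Q)=[0,1)\setminus\grM(Q)$.

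On the major arcs a standard pruning argument yields, for $\alpha=a/q+\beta\in\grM(Q)$, the approximation
\begin{equation*}
f(\alpha)=q^{-3}S(q,a)v(\beta)+O\bigl(P^{3-\delta_{0}}\bigr),\quad S(q,a)=\sum_{1\leq\bfr\leq q}e\bigl(aT(\bfr)^{k}/q\bigr),\quad v(\beta)=\int_{[0,P]^{3}}e\bigl(\beta T(\bfx)^{k}\bigr)\,d\bfx,
\end{equation*}
for some $\delta_{0}>0$. Substituting this into the major-arc integral and completing the $\beta$-integral to $\Real$ produces the decomposition of the main term into $\mathfrak{S}(n)$ and the singular integral $\mathfrak{J}(n)=\int_{\Real}v(\beta)^{s}e(-\beta n)\,d\beta$. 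The change of variables $\bfx=P\boldsymbol{\xi}$ factors $v(\beta)=P^{3}w(n\beta)$ with $w(\gamma)=\int_{[0,1]^{3}}e(\gamma T(\boldsymbol{\xi})^{k})\,d\boldsymbol{\xi}$, whence $\mathfrak{J}(n)=n^{s/k-1}\int_{\Real}w(\gamma)^{s}e(-\gamma)\,d\gamma$. Evaluating this remaining integral by computing the explicit density of $T(\boldsymbol{\xi})^{k}$ on $[0,1]^{3}$ yields the advertised constant $\Gamma(4/3)^{3s}\Gamma(1+1/k)^{s}\Gamma(s/k)^{-1}$, where the three factors of $\Gamma(4/3)$ per summand reflect the density of each of the three cubes composing $T$, while $\Gamma(1+1/k)^{s}\Gamma(s/k)^{-1}$ is the usual Waring factor coming from the outer $k$-th power.

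For the lower bound $\mathfrak{S}(n)\gg 1$, one factors $\mathfrak{S}(n)=\prod_{p}\chi_{p}(n)$, verifies $\chi_{p}(n)=1+O(p^{-1-\eta})$ for $p\geq p_{0}$ so that the Euler product converges absolutely, and proves $\chi_{p}(n)\gg 1$ for small $p$ by exhibiting a non-singular $p$-adic solution of $n=\sum_{i=1}^{s}T(\bfx_{i})^{k}$; the availability of such a solution is guaranteed by the hypothesis $s\geq H(k)$ together with the three-variable flexibility of $T$.

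The principal obstacle is the minor arc estimate $\int_{\grm(Q)}|f(\alpha)|^{s}\,d\alpha=O(n^{s/k-1-\delta})$. My approach would be a H\"older dissection
\begin{equation*}
\int_{\grm(Q)}|f(\alpha)|^{s}\,d\alpha\leq\Bigl(\sup_{\alpha\in\grm(Q)}|f(\alpha)|\Bigr)^{s-2t}\int_{0}^{1}|f(\alpha)|^{2t}\,d\alpha
\end{equation*}
for some integer $t<s/2$. The pointwise supremum is controlled by a Weyl-type inequality for the polynomial phase $T(\bfx)^{k}$ of degree $3k$ in three variables, giving $\sup_{\alpha\in\grm(Q)}|f(\alpha)|\ll P^{3-\sigma}$ for some $\sigma=\sigma(k,\tau)>0$. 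For the mean value, one writes $f(\alpha)=\sum_{y}r_{3}(y)e(\alpha y^{k})$ and controls the resulting weighted $2t$-th moment by combining Wooley's efficient congruencing (applied, after the substitution $y=T(\bfx)$, to the Vinogradov system obtained by equating elementary symmetric functions of the $y_{i}^{k}$) with the Hua-type bound $\sum_{y}r_{3}(y)^{2}\ll P^{3+\eps}$ to absorb the multiplicity weights. The threshold $H(k)=9k^{2}-k+2$ emerges as the smallest number of variables for which the Weyl saving $P^{-\sigma(s-2t)}$ overcomes the excess of the mean value over its diagonal size, delivering the required saving of $n^{-\delta}$; arranging these Weyl and mean-value estimates so that they mesh exactly at this threshold is the technical crux of the argument.
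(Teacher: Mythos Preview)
Your major-arc outline and treatment of the singular series/integral match the paper's, so no issue there. The gap is in the minor-arc argument.

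The bound you invoke, $\sum_{y}r_{3}(y)^{2}\ll P^{3+\eps}$ (the sum running over $y\leq 3P^{3}$), is precisely Hypothesis~$K^{*}$ for cubes and is not known unconditionally. Hua's lemma gives only $\sum_{y\leq X}r_{3}(y)^{2}\ll X^{7/6+\eps}$, i.e.\ $P^{7/2+\eps}$ here. With the true exponent your weighted degree-$k$ restriction/mean-value approach loses an extra $P^{1/4}$ per copy of $f$ and does not produce the threshold $H(k)=9k^{2}-k+2$. Indeed, the shape $9k^{2}$ already signals a degree-$3k$ phenomenon, not a degree-$k$ one, so a scheme built around the degree-$k$ Vinogradov system in $y$ cannot be what is driving the constraint.

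The paper takes a genuinely different route on $\grm$ that avoids the $L^{2}$-norm of $r_{3}$ altogether. One writes $f(\alpha)=\sum_{\bfx\leq P}f_{\bfx}(\alpha)$ with $\bfx=(x_{1},x_{2})$ and $f_{\bfx}(\alpha)=\sum_{x\leq P}e\bigl(\alpha(x_{1}^{3}+x_{2}^{3}+x^{3})^{k}\bigr)$, so that for each fixed $\bfx$ the inner sum is a \emph{one-variable} Weyl sum of degree $3k$ in $x$. H\"older's inequality reduces $\int_{\grm}|f|^{2t}$ to $P^{4t-2}\sum_{\bfx}\int_{\grm}|f_{\bfx}|^{2t}$, and an orthogonality trick introducing auxiliary frequencies $\alpha_{1},\dots,\alpha_{k-1}$ converts this into the Vinogradov mean value for $F(\bfalpha)=\sum_{x\leq P}e(\sum_{j}\alpha_{j}x^{3j})$ over a product minor-arc set. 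Weyl's inequality plus Wooley's nested efficient congruencing bound (Theorem~14.5 of \cite{Woo4}, applied with $r=3k-2$) then yields the required saving, and this is where $H(k)=9k^{2}-k+2$ comes from.

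Your restriction-flavoured idea is in fact what the paper uses in its Appendix, where the genuine Hua bound $P^{7/2+\eps}$ is interpolated via H\"older against the degree-$3k$ minor-arc estimate to shave a few variables off $H(k)$---but only for $2\leq k\leq 7$, not in general.
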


Our proof of Theorem \ref{thm9.1} is based on the application of the Hardy-Littlewood method. In order to discuss the constraint of the previous result on the number of variables, we define first $\tilde{G}(k)$ as the minimum number such that for $s\geq \tilde{G}(k)$, the anticipated asymptotic formula in the classical Waring's problem holds. We remind the reader that as a consequence of Vinogradov's mean value theorem, Bourgain \cite{Bou} showed that $\tilde{G}(k)\leq k^{2}-k+O(\sqrt{k})$. The lack of understanding of the cardinality of the set $\mathscr{C}$ mentioned at the beginning of the paper both weakens the minor arc bounds and prevents us from having a better understanding of its distribution over arithmetic progressions, which often comes into play on the major arc analysis. The methods used in this memoir then are based on arguments in which in most of the sums of three cubes employed in the representation, all but one of the cubes is fixed in the associated analysis. Consequently, the constraint for the number of variables that we obtain here is asymptotic to the bound for $\tilde{G}(3k)$ mentioned above.

 The problem becomes more challenging when we remove the counting of the multiplicities, and even if getting an asymptotic formula seems out of reach, Theorem \ref{thm9.1} can be used to obtain a non-trivial lower bound. However, the whole strategy relies on an estimate for the $L^{2}$-norm of the sequence $\big(r_{3}(x)\big)_{x\leq X}$ of the shape
\begin{equation}\label{rap}\displaystyle\sum_{x\leq X}r_{3}(x)^{2}\ll X^{7/6+\varepsilon}\end{equation} that follows after an application of Hua's Lemma \cite[Lemma 2.5]{Vau}. Instead of taking that approach, we restrict the triples to lie on $\mathcal{C}(P)=\big\{\mathbf{x}\in [1,P]^{3}:\ x_{1},x_{2}\in\mathcal{A}(P,P^{\eta})\big\}$, where $\eta>0$ is a small enough fixed parameter and 
$$\mathcal{A}(X,R)=\{n\in [1,X]\cap \mathbb{N}: p\mid n\text{ and $p$ prime}\Rightarrow p\leq R\},$$ and make use of the stronger estimate
\begin{equation}\label{ec12}\sum_{x\leq X}s_{3}(x)^{2}\ll X^{1+\nu}\end{equation} due to Wooley \cite[Theorem 1.2]{Woo3}, where $s_{3}(x)={\rm card}\big\{\bfx\in\mathcal{C}(P):\ x=T(\bfx)\big\}$ and $\nu=0.08290523$. It transpires that one should then have some control of the order of magnitude of the analogous function of $R(n)$ when we impose that restriction on the triples. For such matters, we define for each $n\in\mathbb{N}$ the aforementioned counting function
\begin{equation}\label{Rseta} R_{\eta}(n)=\sum_{\mathbf{X}\in\mathcal{X}_{n}}s_{3}(x_{1})\cdots s_{3}(x_{s}).\end{equation}We also introduce Dickman's function, defined for real $x$ by
$$\rho(x)=0\text{ when } x<0,$$
$$\rho(x)=1 \text{ when } 0\leq x\leq 1,$$
$$\rho \text{ continuous for } x>0,$$
$$\rho \text{ differentiable for } x>1$$
$$x\rho'(x)=-\rho(x-1) \text{ when } x>1.$$
\begin{thm}\label{thm9.3}
Let $s$ be any positive integer with $s\geq H(k).$ Then, there exists $\delta>0$ such that
$$R_{\eta}(n)=\Gamma\big(4/3\big)^{3s}\Gamma\big(1+1/k\big)^{s}\Gamma\big(s/k\big)^{-1}\rho\big(1/\eta\big)^{2s}\mathfrak{S}(n)n^{s/k-1}+O(n^{s/k-1}(\log n)^{-\delta}),$$
where the singular series satisfies $\mathfrak{S}(n)\gg 1.$ 
\end{thm}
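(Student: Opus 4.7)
The plan is to apply the Hardy--Littlewood method to the generating function
\[ h(\alpha) = \sum_{\mathbf{x} \in \mathcal{C}(P)} e\bigl(\alpha T(\mathbf{x})^k\bigr) = \sum_y s_3(y) e(\alpha y^k), \]
for which $R_\eta(n) = \int_0^1 h(\alpha)^s e(-\alpha n)\, d\alpha$, using the same Farey dissection into major arcs $\mathfrak{M}$ and minor arcs $\mathfrak{m}$ employed in the proof of Theorem \ref{thm9.1}.

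On the major arcs, the aim is to relate $h(\alpha)$ to the unrestricted generating function $f(\alpha) = \sum_{\mathbf{x} \in [1,P]^3} e(\alpha T(\mathbf{x})^k)$ used in the proof of Theorem \ref{thm9.1}. The key input is the equidistribution of smooth numbers in arithmetic progressions: for any admissible modulus $q$ and residue class $c$,
\[ \#\{x \in \mathcal{A}(P, P^\eta): x \equiv c \pmod q\} = \frac{\rho(1/\eta) P}{q} + O\bigl(P (\log P)^{-A}\bigr). \]
Applying this to each of the two smooth variables encoded by $\mathcal{C}(P)$ yields an approximation of the form $h(\alpha) \approx \rho(1/\eta)^2 f(\alpha)$ on $\mathfrak{M}$, and consequently
\[ \int_\mathfrak{M} h(\alpha)^s e(-\alpha n)\, d\alpha = \rho(1/\eta)^{2s} \int_\mathfrak{M} f(\alpha)^s e(-\alpha n)\, d\alpha + O\bigl(n^{s/k-1}(\log n)^{-\delta}\bigr). \]
The first integral on the right is precisely the main term computed in the proof of Theorem \ref{thm9.1}, giving the asymptotic stated in the theorem.

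On the minor arcs, the tools are the bound (\ref{ec12}) of Wooley, which by Parseval gives $\int_0^1 |h(\alpha)|^2\, d\alpha = \sum_y s_3(y)^2 \ll P^{3(1+\nu)}$, together with a pointwise Weyl-type estimate for $h(\alpha)$ on $\mathfrak{m}$ derived from standard technology for exponential sums over $T(\mathbf{x})^k$. Combining these via
\[ \int_\mathfrak{m} |h(\alpha)|^s\, d\alpha \leq \Bigl(\sup_{\alpha \in \mathfrak{m}} |h(\alpha)|\Bigr)^{s-2} \int_0^1 |h(\alpha)|^2\, d\alpha \]
produces, for $s \geq H(k)$, a contribution that is absorbed into the error term, just as in the corresponding range of Theorem \ref{thm9.1}.

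The main obstacle is the major arc analysis: one must verify that the smooth-to-unrestricted replacement holds uniformly in $\alpha \in \mathfrak{M}$ with an error sharp enough to deliver the main term, and the only polynomial-in-$\log$ savings available from smooth number equidistribution in progressions are precisely what force the weaker error term $(\log n)^{-\delta}$ here in place of the $n^{-\delta}$ available in Theorem \ref{thm9.1}. The positivity $\mathfrak{S}(n) \gg 1$ is inherited directly from Theorem \ref{thm9.1}, since the singular series attached to $h$ and $f$ are identical.
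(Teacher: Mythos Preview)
Your proposal has two genuine gaps, both rooted in the choice of dissection.

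On the major arcs, you cannot use the arcs $\mathfrak{M}$ from Theorem~\ref{thm9.1}, where $q$ ranges up to $P^{\xi}$. The equidistribution of $P^{\eta}$-smooth numbers in residue classes modulo $q$ carries an error of size $O(P/\log P)$ (this is what the paper records as Lemma~\ref{lem8.1}); that is only a saving over the main term $\rho(1/\eta)P/q$ when $q$ is bounded by a power of $\log P$. For $q$ as large as $P^{\xi}$ the error swamps the main term and the replacement $h(\alpha)\approx\rho(1/\eta)^{2}f(\alpha)$ collapses on almost all of $\mathfrak{M}$. This is why the paper introduces much narrower major arcs $\mathfrak{N}$ with $q\le(\log P)^{\kappa}$, $\kappa<1$, and carries out the smooth approximation there (Lemma~\ref{lem888}, Corollary~\ref{corcor}).

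On the minor arcs, your $L^{2}$-plus-sup scheme is quantitatively far too weak. Writing $\sup_{\mathfrak{m}}|h|\le P^{3-\sigma}$ and $\int_{0}^{1}|h|^{2}\ll P^{3(1+\nu)}$ gives
\[
\int_{\mathfrak{m}}|h|^{s}\ll P^{3s-3+3\nu-\sigma(s-2)},
\]
so one would need $\sigma(s-2)>3(k-1)+3\nu$. The best available pointwise $\sigma$ (from Weyl/Vinogradov applied to the degree-$3k$ inner sum) is of order $1/(9k^{2})$, whereas with $s=H(k)$ you need $\sigma$ of order $1/(3k)$; the inequality fails for every $k\ge2$. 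Moreover, once the major arcs are narrowed to $\mathfrak{N}$ as they must be, the complementary minor arcs $\mathfrak{n}$ only admit a pointwise saving of $(\log P)^{-\delta}$, making the situation worse. The paper instead (Proposition~\ref{prop123}) retains the H\"older/Vinogradov-mean-value argument of Proposition~\ref{prop222}, and to handle the wider $\mathfrak{n}$ introduces an auxiliary dissection of $[0,1)^{k}$ into $\mathfrak{w}$ and $\mathfrak{W}\setminus\mathfrak{P}$: a power-saving mean value on $\mathfrak{w}$, a major-arc-type $L^{2t-2}$ estimate on $\mathfrak{W}$, and the $(\log P)^{-\delta}$ pointwise bound of Lemma~\ref{pity} on $\mathfrak{W}\setminus\mathfrak{P}$.
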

An application of this theorem then, together with equation (\ref{ec12}) and some other arguments yield the following result, which improves substantially the bound that one could obtain if no restriction on the triples was made.
\begin{thm}\label{thm9.2}
Let $s$ be any positive integer with $s\geq H(k)+1.$ One has the lower bound
$$r(n)\gg n^{(1-\nu)s/k-1},$$ where $\nu$ was defined right after (\ref{ec12}).
\end{thm}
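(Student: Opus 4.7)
The plan is to combine Cauchy--Schwarz with Theorem~\ref{thm9.3} and the estimate (\ref{ec12}). Setting $R_\eta(n) = \sum_{\mathbf{X} \in \mathcal{X}_n} \prod_{i=1}^{s} s_3(x_i)$, the Cauchy--Schwarz inequality yields $R_\eta(n)^{2} \leq r(n)\,\mathcal{T}(n)$ with
\[
\mathcal{T}(n) \;:=\; \sum_{\mathbf{X} \in \mathcal{X}_n} \prod_{i=1}^{s} s_3(x_i)^{2},
\]
so $r(n) \geq R_\eta(n)^{2}/\mathcal{T}(n)$. Since $s \geq H(k)+1 > H(k)$, Theorem~\ref{thm9.3} combined with $\mathfrak{S}(n) \gg 1$ and the positivity of $\rho(1/\eta)^{2s}$ gives $R_\eta(n) \gg n^{s/k-1}$. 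Hence the argument reduces to proving
\begin{equation}\label{planTn}
\mathcal{T}(n) \;\ll\; n^{(1+\nu)s/k - 1},
\end{equation}
since this implies $r(n) \gg n^{2(s/k-1) - (1+\nu)s/k + 1} = n^{(1-\nu)s/k - 1}$, as desired.

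To establish (\ref{planTn}), I would first isolate one summation variable, writing
\[
\mathcal{T}(n) \;=\; \sum_{\substack{x \in \mathscr{C} \\ x^{k} \leq n}} s_3(x)^{2} \, \mathcal{T}^{(s-1)}(n - x^{k}),
\]
where $\mathcal{T}^{(s-1)}(m)$ is defined analogously with $s-1$ variables. The essential step is the uniform bound
\begin{equation}\label{planTsm1}
\mathcal{T}^{(s-1)}(m) \;\ll\; m^{(1+\nu)(s-1)/k - 1}.
\end{equation}
This is where the hypothesis $s \geq H(k)+1$ enters: it ensures $s-1 \geq H(k)$, so the Hardy--Littlewood machinery underlying Theorem~\ref{thm9.3} is available for $\mathcal{T}^{(s-1)}$. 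Granted (\ref{planTsm1}), and noting that $(1+\nu)(s-1)/k - 1 \geq \nu > 0$ (since $(s-1)/k \geq H(k)/k \gg 1$), one has $(n-x^{k})^{(1+\nu)(s-1)/k-1} \leq n^{(1+\nu)(s-1)/k-1}$. Combining with $\sum_{x \leq n^{1/k}} s_3(x)^{2} \ll n^{(1+\nu)/k}$ furnished by (\ref{ec12}) produces
\[
\mathcal{T}(n) \;\ll\; n^{(1+\nu)(s-1)/k - 1} \cdot n^{(1+\nu)/k} \;=\; n^{(1+\nu)s/k - 1},
\]
which is (\ref{planTn}).

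The main obstacle is thus the proof of (\ref{planTsm1}), which amounts to running the Hardy--Littlewood method with the weight $s_3(x)^{2}$ in place of $s_3(x)$. Writing
\[
\mathcal{T}^{(s-1)}(m) \;=\; \int_0^1 F(\alpha)^{s-1} e(-\alpha m) \, d\alpha, \qquad F(\alpha) \;=\; \sum_{x} s_3(x)^{2} \, e(\alpha x^{k}),
\]
the bound (\ref{ec12}) gives $F(0) \ll m^{(1+\nu)/k}$, so the expected major-arc main term for $\mathcal{T}^{(s-1)}(m)$ is of the required order $m^{(1+\nu)(s-1)/k - 1}$. The novelty is on the minor arcs, where a Weyl-type bound on $F$ is needed. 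A natural route is to use orthogonality to rewrite
\[
F(\alpha) \;=\; \int_0^1 \Bigl(\sum_{\mathbf{y} \in \mathcal{C}(P)} e(\alpha T(\mathbf{y})^{k} + \gamma T(\mathbf{y}))\Bigr) \Bigl(\sum_{\mathbf{z} \in \mathcal{C}(P)} e(-\gamma T(\mathbf{z}))\Bigr) d\gamma,
\]
thereby reducing the minor-arc analysis of $F$ to that of the standard exponential sums over $\mathcal{C}(P)$ already controlled in the proof of Theorem~\ref{thm9.3}.
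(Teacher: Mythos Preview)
Your Cauchy--Schwarz reduction $R_\eta(n)^2\le r(n)\,\mathcal{T}(n)$ is correct, but the crucial step (\ref{planTsm1}) is not established and is \emph{not} a consequence of Theorem~\ref{thm9.3}. That theorem concerns the $(s-1)$-variable analogue of $R_\eta$, which carries the weights $s_3(x_i)$; your $\mathcal{T}^{(s-1)}$ carries $s_3(x_i)^2$, and running the circle method for $F(\alpha)=\sum_x s_3(x)^2 e(\alpha x^k)$ would require genuinely new ingredients. On the minor arcs, the arguments of Propositions~\ref{prop222} and~\ref{prop123} open up $s_3(x)$ as a sum over $\mathbf{x}\in\mathcal{C}(P)$, fix two of the three cube variables via H\"older, and reduce to one-variable Weyl sums; with $s_3(x)^2$ one has six cube variables tied by the diagonal $T(\mathbf{y})=T(\mathbf{z})$, and this scheme no longer applies. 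Your orthogonality identity for $F$ is correct but leaves an integral in $\gamma$ of a product of two $\mathcal{C}(P)$-sums, one carrying the twisted phase $\alpha T(\mathbf{y})^k+\gamma T(\mathbf{y})$; extracting a minor-arc saving uniformly in $\gamma$ is not routine and is not addressed. On the major arcs you would in addition need the distribution of $s_3(x)^2$ in residue classes, essentially a sixth-moment input, which is again new.

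The paper bypasses this entirely by a simpler device. Set $\theta=\nu/k$ and $S_K(n)=\{m\le n^{1/k}: s_3(m)>Kn^\theta\}$, and split $R_\eta(n)=R_0(n)+R_1(n)$, where $R_1$ collects tuples with some $x_i\in S_K(n)$. A Chebyshev-type estimate using (\ref{ec12}) gives $\sum_{m\in S_K(n)} s_3(m)\ll K^{-1}n^{1/k}$, and Theorem~\ref{thm9.3} applied with $s-1$ variables (here is where $s\ge H(k)+1$ is used) yields $R_1(n)\ll K^{-1}n^{s/k-1}$. Choosing $K$ large forces $R_0(n)\asymp R_\eta(n)\gg n^{s/k-1}$. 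But every tuple counted by $R_0$ satisfies $s_3(x_i)\le Kn^\theta$ for all $i$, so $R_0(n)\le (Kn^\theta)^s r(n)$, whence $r(n)\gg n^{s/k-1-s\theta}=n^{(1-\nu)s/k-1}$. This uses only the already-available Theorem~\ref{thm9.3} and an elementary level-set bound, with no need to analyse weights $s_3^2$.
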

It is worth noting that the preceding lower bound may be the best possible
estimate attainable with the current knowledge available. The final question that will be addressed here is the constraint on the number of variables that guarantees the existence of solutions. For such purpose, we define $G_{3}(k)$ as the minimum integer such that for all $s\geq G_{3}(k)$ then $r(n)\geq 1$ holds for sufficiently large integers. We apply a previous result of Wooley \cite{Woo7} to obtain the following bound.
\begin{thm}\label{thm1.4}
Let $k\in\mathbb{N}$. Then,
$$G_{3}(k)\leq 3k\big(\log k+\log\log k+O(1)\big).$$
\end{thm}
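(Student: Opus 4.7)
The plan is to reduce the question to a Waring-type problem for a polynomial sequence whose values all lie in $\mathscr{C}$. Since $m^{3}+2 = 1^{3}+1^{3}+m^{3}$, every integer of the form $m^{3}+2$ with $m\geq 1$ belongs to $\mathscr{C}$. Consequently, whenever $n$ admits a representation
\begin{equation*}
n=\sum_{i=1}^{s}(m_{i}^{3}+2)^{k}
\end{equation*}
with $m_{i}\in\mathbb{N}$, the tuple $(m_{1}^{3}+2,\ldots,m_{s}^{3}+2)$ lies in $\mathscr{C}^{s}$ and certifies $r(n)\geq 1$. It therefore suffices to bound the minimum number $s$ needed to represent every sufficiently large $n$ as a sum of $s$ values of the polynomial $P(m)=(m^{3}+2)^{k}$.

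The polynomial $P$ is monic of degree $3k$, with $P(m)-m^{3k}$ of degree at most $3k-3$. Wooley's bound \cite{Woo7} furnishes the estimate $G(d)\leq d(\log d+\log\log d+O(1))$ for Waring's problem for $d$-th powers; I apply it with $d=3k$ and observe that the Weyl-type inequalities and Hardy-Littlewood machinery underpinning this bound extend to the polynomial phase $\alpha P(m)$ without loss, since both the minor arc estimates and the major arc asymptotics depend only on the degree and the leading coefficient. This yields a Waring-type threshold for $P$ of at most
\begin{equation*}
3k\big(\log(3k)+\log\log(3k)+O(1)\big)=3k\big(\log k+\log\log k+O(1)\big),
\end{equation*}
where the additive constants absorb $\log 3$ and the quantity $\log\log(3k)-\log\log k$. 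This is the stated bound on $G_{3}(k)$.

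The main technical obstacle lies in this transfer from the monomial $m^{3k}$ to $P(m)$, and specifically in verifying the positivity of the singular series associated with the polynomial Waring problem for $P$. Should $p$-adic solubility by summands of the form $(1^{3}+1^{3}+m^{3})^{k}$ fail at a small prime $p$, one replaces the two fixed cubes by $a^{3}+b^{3}$ for suitably chosen small positive integers $a,b$. Such a substitution alters only the lower-order coefficients of $P$, leaves the Weyl and minor-arc analysis untouched, and preserves the leading-order count of variables, so the bound $G_{3}(k)\leq 3k(\log k+\log\log k+O(1))$ stands after finitely many such local adjustments.
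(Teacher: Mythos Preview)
Your reduction strategy is sound in spirit but has a genuine gap at the crucial step. Wooley's bound $G(d)\leq d(\log d+\log\log d+O(1))$ from \cite{Woo7} is proved via smooth Weyl sums and the iterative method, both of which exploit the multiplicative identity $(xy)^{d}=x^{d}y^{d}$ in an essential way: one restricts to smooth variables and repeatedly extracts prime factors. The polynomial $P(m)=(m^{3}+2)^{k}$ has no such multiplicative structure, and the assertion that the machinery transfers ``without loss'' is unjustified; one cannot invoke Wooley's theorem for a generic degree-$3k$ polynomial and retain the same leading constant. Your fallback for local solubility (adjusting the fixed cubes $a^{3}+b^{3}$) is also too vague, since you have not argued that a single choice of $a,b$ resolves all primes simultaneously.

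The paper avoids both difficulties with a cleaner specialization. Since $T(x,x,x)=3x^{3}$, one has $(3x^{3})^{k}=3^{k}x^{3k}$, a genuine $3k$-th power times a fixed scalar. The argument first spends $u=\lceil 9k/4\rceil$ free summands $T(\mathbf{x}_{i})^{k}$ with $1\leq\mathbf{x}_{i}\leq 3^{k}$ to match the residue of $n$ modulo $3^{k}$ (this is possible by Lemma~\ref{lem11.3} combined with Vaughan \cite[Lemma~2.14]{Vau}). The quotient $3^{-k}\big(n-\sum_{i}T(\mathbf{x}_{i})^{k}\big)$ is then a large integer, to which Wooley's bound applies directly: it is a sum of at most $G(3k)$ pure $3k$-th powers $x_{j}^{3k}$, giving $n=\sum_{i}T(\mathbf{x}_{i})^{k}+\sum_{j}(3x_{j}^{3})^{k}$. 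Hence $G_{3}(k)\leq u+G(3k)=3k(\log k+\log\log k+O(1))$, with no appeal to a polynomial Waring problem.
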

Our proofs for the main theorems of the paper are based on the application of the Hardy-Littlewood method. In Section \ref{sec2}, we apply a mean value estimate related to that of Vinogradov to bound the minor arc contribution. Section \ref{sec3} deals with estimates of complete exponential sums and other related sums. In Sections \ref{sec4} we discuss the local solubility of the problem and some properties of the singular series and include a brief proof of Theorem \ref{thm1.4}. Using the Riemann-Stieltjes integral we give an approximation of $f(\alpha)$ over the major arcs in Section \ref{sec6}. In Section \ref{sec7} we study the singular integral, we obtain an asymptotic formula for the major arcs and we include a proof of Theorem \ref{thm9.1}. Section \ref{Sec99} is devoted to the study of the asymptotic formula when we introduce smooth numbers, and Theorem \ref{thm9.2} is then proven in Section \ref{Sec999} via an application of Theorem \ref{thm9.3}. We have also included a small appendix in which we improve the constraint on the number of variables needed in Theorem \ref{thm9.1} for small exponents by using restriction estimates.

\emph{Notation}.  Whenever $\varepsilon$ appears in any bound, it will mean that the bound holds for every $\varepsilon>0$, though the implicit constant may depend on $\varepsilon$. We adopt the convention that when we write $\delta$ in the computations we mean that there exists a positive constant such that the bound holds. Unless specified, any lower case letter $\mathbf{x}$ written in bold will denote a triple of integers $(x_{1},x_{2},x_{3})$. For any scalar $\lambda$ and any vector $\mathbf{x}$ we write $\lambda \mathbf{x}$ for the vector $(\lambda x_{1},\lambda x_{2}, \lambda x_{3})$. When $R,V\in\mathbb{Z}^{d}$ then $R\equiv V\pmod q$ will mean that $R_{i}\equiv V_{i}\pmod{q}$ for all $1\leq i\leq d$. We use $\ll$ and $\gg$ to denote Vinogradov's notation, and write $A\asymp B$ whenever $A\ll B\ll A$. As usual in analytic number theory, for each $x\in\mathbb{R}$ then $e(x)$ will mean $\exp(2\pi i x),$ and for each prime $p$, the number $e(x/p)$ will be written as $e_{p}(x).$ We write $p^{r}|| n$ to denote that $p^{r}| n$ but $p^{r+1}\nmid n.$

\section{Minor arc estimate.}\label{sec2}
We obtain estimates for certain moments of an exponential sum on the minor arcs which we now define. Fix $s,k\geq 2$ and consider
\begin{equation*}f(\alpha)=\sum_{\bfx\leq P}f_{\bfx}(\alpha),\ \ \ \ \ \ \  \ \text{where}\ \  f_{\bfx}(\alpha)=\sum_{1\leq x\leq P}e\big(\alpha T(\bfx,x)^{k}\big)\end{equation*} and $\mathbf{x}\in\mathbb{N}^{2}$. Recalling (\ref{Rs}), note that by orthogonality it follows that
$$R(n)=\int_{0}^{1}f(\alpha)^{s}e(-\alpha n)d\alpha.$$ The purpose of this section is to bound the minor arc contribution of this integral. In order to make further progress we make use of a Hardy-Littlewood dissection in our analysis. When $a\in\mathbb{Z}$ and $q\in\mathbb{N}$ satisfy $0\leq a\leq q\leq P^{\xi}$ and $(a,q)=1$ with $\xi<\frac{s}{s+2}$, consider 
\begin{equation}\label{5us}\grM(a,q)=\Big\{ \alpha\in [0,1):\Big\lvert \alpha-a/q\Big\rvert \leq \frac{P^{\xi}}{qn} \Big\}.\end{equation} Then the major arcs $\grM$ will be the union of these arcs and $\grm=[0,1)\setminus \grM $ will be the minor arcs. 
\begin{prop}\label{prop222} When $s$ is any positive integer with $s\geq H(k)$ one has
$$\int_{\grm}\lvert f(\alpha)\rvert^{s} {\rm     d}\alpha\ll P^{3s-3k-\delta}.$$
Moreover, if $s\geq 3k(3k+1)$ then it follows that
$$\int_{\grm}\lvert f(\alpha)\rvert^{s} {\rm     d}\alpha\ll P^{3s-3k-\xi+\varepsilon}.$$
\end{prop}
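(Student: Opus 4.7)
The plan is to execute the standard Hardy-Littlewood dissection: establish (i) a pointwise Weyl-type supremum bound for $f$ on $\grm$, (ii) an $L^{2t}$-mean-value estimate for $f$, and (iii) interpolate via H\"older's inequality.

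For (i), fix a pair $\bfx=(x_1,x_2)\in[1,P]^2$ and view the inner sum $f_{\bfx}(\alpha)$ as a Weyl sum over $x\le P$ attached to the polynomial $(x_1^3+x_2^3+x^3)^k$, of degree $3k$ in $x$ with leading coefficient $1$. When $\alpha\in\grm$, the shape of $\grM$ in (\ref{5us}) together with Dirichlet's theorem forces any rational approximation $a/q$ to $\alpha$ to have $q$ in the range where Weyl's inequality of degree $3k$ is effective, yielding $|f_{\bfx}(\alpha)|\ll P^{1-\sigma+\varepsilon}$ for some $\sigma=\sigma(k,\xi)>0$. Summing the trivial bound $|f(\alpha)|\le\sum_{\bfx}|f_{\bfx}(\alpha)|$ over the $P^2$ pairs then produces $\sup_{\alpha\in\grm}|f(\alpha)|\ll P^{3-\sigma+\varepsilon}$.

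For (ii), the target is
$$
\int_0^1|f(\alpha)|^{2t}\,d\alpha\ll P^{6t-3k+\varepsilon}
$$
once $2t$ is at or above Bourgain's threshold $\tilde G(3k)\le 9k^2-3k+O(\sqrt k)$. The scheme is to apply H\"older's inequality to the outer sum over the $P^2$ pairs $(x_1,x_2)$ to pass a factor $P^{4t-2}$ outside, reducing matters to bounding $\sum_{\bfx}\int_0^1|f_{\bfx}(\alpha)|^{2t}\,d\alpha$. For fixed $\bfx$, the inner moment counts solutions in $y_i,y_i'\le P$ of $\sum_i(A+y_i^3)^k=\sum_i(A+(y_i')^3)^k$ with $A=x_1^3+x_2^3$, and expanding gives the linear relation $\sum_{j=1}^k\binom{k}{j}A^{k-j}V_j=0$ among the quantities $V_j=\sum_iy_i^{3j}-\sum_i(y_i')^{3j}$. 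Applying a Vinogradov-type mean-value estimate to the degree-$3k$ Weyl sum attached to $(A+y^3)^k$, in its Bourgain/Wooley form, yields $\int|f_{\bfx}|^{2t}d\alpha\ll P^{2t-3k+\varepsilon}$, and combining with the H\"older factor produces the stated moment bound.

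For (iii), H\"older's inequality gives
$$
\int_{\grm}|f|^s\,d\alpha\le\Big(\sup_{\alpha\in\grm}|f(\alpha)|\Big)^{s-2t}\int_0^1|f|^{2t}\,d\alpha\ll P^{(s-2t)(3-\sigma)+6t-3k+\varepsilon}=P^{3s-3k-(s-2t)\sigma+\varepsilon},
$$
so the first assertion follows for $s\ge H(k)$ by choosing $s-2t$ bounded below and $\varepsilon$ so small that $(s-2t)\sigma-\varepsilon>\delta$ for some $\delta>0$. The stronger bound for $s\ge 3k(3k+1)$ is reached by pushing $2t$ past the full Vinogradov threshold for degree $3k$, at which point the moment bound becomes sharp up to $P^{\varepsilon}$ and the $P^{-\xi+\varepsilon}$ saving can be read off directly from the defect $P^{-\xi}$ in the Lebesgue measure of $\grM$.

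The principal obstacle is securing the inner mean-value estimate $\int|f_{\bfx}|^{2t}\ll P^{2t-3k+\varepsilon}$ uniformly in $A$: the polynomial $(A+y^3)^k$ has nonvanishing coefficients only at the powers $y^{3j}$ for $0\le j\le k$, so one must either invoke a Bourgain-type bound for single-variable Weyl sums of prescribed shape and degree $3k$, or reduce the diagonal equation to a partial Vinogradov system in the exponents divisible by $3$ and absorb the cube restriction through an auxiliary moment argument.
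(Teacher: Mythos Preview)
Your outline has the right architecture—H\"older over the outer pairs, a mean value on the one-variable sums, and a Weyl saving on $\grm$—but two genuine gaps prevent it from going through as written.

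First, step (ii) is the crux and you have not proved it. There is no off-the-shelf ``Vinogradov-type mean-value estimate'' giving $\int_0^1|f_{\bfx}(\alpha)|^{2t}\,d\alpha\ll P^{2t-3k+\varepsilon}$ for a one-dimensional Weyl sum attached to $(A+y^3)^k$, uniformly in $A$; Bourgain's bound $\tilde G(3k)\le 9k^2-3k+O(\sqrt k)$ concerns the monomial $y^{3k}$, not this shifted polynomial. Your option (b) is precisely what the paper does, and it has to be carried out: one inserts auxiliary frequencies $\alpha_1,\ldots,\alpha_{k-1}$ by orthogonality (summing over the $O(P^{3k(k-1)/2})$ admissible values of $n_j=\sum y_i^{3j}-\sum (y_i')^{3j}$) to pass from $f_{\bfx}$ to the translation-invariant sum $F(\bfalpha)=\sum_{x\le P}e(\sum_{j=1}^k\alpha_j x^{3j})$, and then bounds $\int_{\grm\times[0,1)^{k-1}}|F|^{2t}$ via Wooley's results on incomplete Vinogradov systems \cite[Theorems 14.4, 14.5]{Woo4}. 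The paper does not subsequently interpolate at all: Theorem 14.5 already delivers a minor-arc saving, so one obtains $\int_{\grm}|f|^{2t}\ll P^{6t-3k-\delta}$ directly at $2t=H(k)$. Your separate-then-interpolate scheme would also need the mean value at some $2t_0<H(k)$, and you have not secured that.

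Second, your explanation of the $P^{-\xi}$ saving when $s\ge 3k(3k+1)$ is incorrect. The measure of $\grm$ is $1-O(P^{2\xi-3k})$, essentially $1$; there is no ``defect $P^{-\xi}$ in the Lebesgue measure of $\grM$'' to exploit. The saving comes instead from the quantitative minor-arc form of the Vinogradov mean value (Wooley \cite[Theorem 14.4]{Woo4}), in which the height $P^{\xi}$ of the major arcs governs the power saving over the complementary set $\grm\times[0,1)^{k-1}$. A measure argument cannot recover this.
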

\begin{proof}
We bound the previous integrals in terms of a mean value of that of Vinogradov and apply estimates derived from Wooley \cite[Theorems 14.4, 14.5]{Woo4}. For such purpose, it is convenient to take the set $\frak{B}=\grm\times [0,1)^{k-1}$ and consider the exponential sums \begin{equation}\label{GF}G_{\mathbf{x}}(\bfalpha)=\displaystyle\sum_{ x\leq P}e\big(\alpha_{k} T(x,\mathbf{x})^{k}+\displaystyle\sum_{j=1}^{k-1}\alpha_{j}x^{3j}\big)\ \ \ \ \text{and} \ \ \ \ F(\bfalpha)=\displaystyle\sum_{x\leq P} e\Big(\sum_{j=1}^{k}\alpha_{j}x^{3j}\Big).\end{equation} We write $H(k)=2t$ for some positive integer $t$. Using H\"older's inequality and orthogonality we find that
\begin{align}\label{puu}&
\int_{\grm}\lvert f(\alpha)\rvert^{2t}d\alpha\ll  P^{4t-2}\int_{\grm}\sum_{\mathbf{x}\leq P}\lvert f_{\mathbf{x}}(\alpha)\lvert^{2t}d\alpha\nonumber
\\
&=P^{4t-2}\sum_{\substack{\mathbf{x}\leq P}}\sum_{n_{j}}\int_{\frak{B}}\lvert G_{\mathbf{x}}(\bfalpha)\lvert^{2t}e\big(-\sum_{j=1}^{k-1}\alpha_{j} n_{j}\big)d\bfalpha\ll P^{4t+3k(k-1)/2}\int_{\frak{B}}\lvert F(\bfalpha) \rvert^{2t}d\bfalpha,
\end{align}
where $(n_{j})_{j}$ runs over the tuples with $1\leq \lvert n_{j}\rvert\leq tP^{3j}.$ Observe that by Weyl's inequality \cite[Lemma 2.4]{Vau} one has that
\begin{equation*}\sup_{\bfalpha\in\frak{B}}\lvert F(\bfalpha)\rvert\ll P^{1-\delta},\end{equation*}
whence this pointwise bound and Theorem 14.5 of \cite{Woo4} with the choice $r=3k-2$ deliver the estimate
\begin{equation}\label{BF}\int_{\frak{B}}\lvert F(\bfalpha)\rvert^{2t}d\bfalpha\ll P^{2t-3k(k+1)/2-\delta}.\end{equation} The above equation and (\ref{puu}) then yield the first part of the proposition. For the second part we use a small modification of Wooley \cite[Theorem 14.4]{Woo4}. On that paper, the author, in a more general setting, takes the choice $\xi=1$ and obtains a saving of $X$ over the expected main term. It transpires that the same exact method can be applied to save $X^{\xi}$ for $\xi<1$. Thus, we have that for $s\geq 3k(3k+1)$ then
$$\int_{\frak{B}}\lvert F(\bfalpha)\rvert^{s}d\bfalpha\ll P^{s-3k(k+1)/2-\xi+\varepsilon}.$$ Replacing $2t$ by $s$ in (\ref{puu}) and using the previous equation we get the desired result.
\end{proof}
\section{Complete exponential sums}\label{sec3}
In this section we study the complete exponential sum associated to the problem and deduce some bounds involving this sum. For such purpose, it is convenient to define for $a\in\mathbb{Z}$ and $q\in\mathbb{N}$ with $(a,q)=1$ the expressions
\begin{equation*}S(q,a)=\sum_{1\leq\mathbf{r}\leq q}e_{q}\big(aT(\mathbf{r})^{k}\big)\ \ \ \ \ \ \ \text{and}\ \ \ \ \ \ \ S_{k}(q,a)=\sum_{r=1}^{q}e_{q}(ar^{k}).\end{equation*}Note that by orthogonality then one can rewrite $S(q,a)$ as
\begin{equation}\label{Sqa}S(q,a)=q^{-1}\sum_{u=1}^{q}S_{3}(q,u)^{3}S_{k}(q,a,-u),\ \ \ \ \ \ \text{where}\  \ S_{k}(q,a,b)=\sum_{r=1}^{q}e_{q}(ar^{k}+br).\end{equation}In what follows we provide bounds for $S(q,a)$ using estimates for $S_{3}(q,a)$ and $S_{k}(q,a,b)$. Observe that by the quasi-multiplicative structure of it then it suffices to investigate the instances when $q=p^{l}$ is a prime power. 

\begin{lem}\label{Sa}
Let $l\geq 2$, let $p$ be a prime number and $a\in\mathbb{Z}$ with $(a,p)=1.$ Then,
\begin{equation*}S(p^{l},a)\ll \min(p^{3l-1},lp^{3l-l/k+\varepsilon}).
 \end{equation*}
\end{lem}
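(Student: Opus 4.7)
The plan is to exploit the identity~(\ref{Sqa}) in two distinct ways, treating the two bounds separately.

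For the bound $S(p^l,a)\ll l p^{3l-l/k+\varepsilon}$, I would classify the outer $u$-sum in (\ref{Sqa}) according to the $p$-adic valuation $j$ of $u$, where $0\le j\le l$ and $j=l$ corresponds to $u=0$. Writing $u=p^j u'$ with $(u',p)=1$ when $j<l$, the standard periodicity identity $S_3(p^l,u)=p^j S_3(p^{l-j},u')$, combined with the classical Weyl bound $|S_3(p^{l-j},u')|\ll p^{2(l-j)/3}$ on prime-power moduli, yields $|S_3(p^l,u)|^3\ll p^{2l+j}$. For the $k$-th power piece, Hua's estimate furnishes $|S_k(p^l,a,b)|\ll p^{l(1-1/k)}$ uniformly in $b$ whenever $(a,p)=1$. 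Since at most $p^{l-j}$ values of $u$ satisfy $p^j\|u$, each $j$ contributes
$$p^{-l}\cdot p^{l-j}\cdot p^{2l+j}\cdot p^{l(1-1/k)}\ll p^{3l-l/k},$$
and summing over the $l+1$ possible values of $j$ produces the claimed bound, with the factor $l$ absorbed into $p^\varepsilon$.

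For the bound $S(p^l,a)\ll p^{3l-1}$, I would proceed via a direct Hensel-lifting argument on $S(p^l,a)$. Decompose $\bfr=\bfr_0+p^{l-1}\bfh$ with $\bfr_0\in(\numset{Z}/p^{l-1}\numset{Z})^3$ and $\bfh\in(\numset{Z}/p\numset{Z})^3$. Since $l\ge 2$, the terms of order $p^{2l-2}$ and $p^{3l-3}$ in the binomial expansion of $(r_{0,i}+p^{l-1}h_i)^3$ vanish modulo $p^l$, giving
$$T(\bfr)^k\equiv T(\bfr_0)^k+3k p^{l-1}T(\bfr_0)^{k-1}\sum_{i=1}^3 r_{0,i}^2 h_i\pmod{p^l},$$
and hence
$$S(p^l,a)=\sum_{\bfr_0\bmod p^{l-1}}e_{p^l}\big(aT(\bfr_0)^k\big)\prod_{i=1}^{3}\sum_{h_i=0}^{p-1}e_p\big(3ak T(\bfr_0)^{k-1}r_{0,i}^2 h_i\big).$$
For primes $p\nmid 3k$, the inner product is $p^3$ precisely when $p\mid T(\bfr_0)^{k-1}r_{0,i}^2$ for each $i=1,2,3$, and vanishes otherwise; since $p\mid r_{0,i}$ for every $i$ already forces $p\mid T(\bfr_0)$, the condition reduces to the single congruence $p\mid T(\bfr_0)$. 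A standard count shows that the cubic form $T$ has $O(p^2)$ zeros in $(\numset{Z}/p\numset{Z})^3$, giving $O(p^{3l-4})$ admissible $\bfr_0\bmod p^{l-1}$; together with the outer factor $p^3$, this yields $|S(p^l,a)|\ll p^{3l-1}$.

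The main obstacle is the case of primes $p\mid 3k$, where the cancellation above breaks down because the coefficient $3k$ of $\sum r_{0,i}^2 h_i$ is already divisible by $p$, and the $\bfh$-sum is $p^3$ for every $\bfr_0$. Fortunately any such prime satisfies $p\le 3k$ and is absolutely bounded in terms of $k$, so the trivial bound $|S(p^l,a)|\le p^{3l}\le 3k\cdot p^{3l-1}$ suffices, yielding an implicit constant depending only on $k$. The remaining verifications, namely the $p$-uniformity of the cubic and $k$-th power Weyl sum bounds (up to a $k$-dependence) and the Weil-type zero count for $T$, are classical and present no serious difficulty.
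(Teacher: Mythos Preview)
Your argument is correct. For the bound $lp^{3l-l/k+\varepsilon}$ you proceed exactly as the paper does, splitting the $u$-sum in (\ref{Sqa}) according to $(u,p^l)$ and invoking the standard prime-power Weyl bound for $S_3$ together with Vaughan's Theorem~7.1 for $S_k(p^l,a,-u)$; the paper packages this as $\sum_u(u,p^l)\ll lp^l$ rather than summing over valuations $j$, but the content is identical. (One minor wording point: the factor $l$ is explicitly present in the stated bound, so it need not be ``absorbed into $p^\varepsilon$''; the $\varepsilon$ arises from the Hua-type estimate on $S_k$.)

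For the bound $p^{3l-1}$ you take a genuinely different route. The paper stays with the decomposition (\ref{Sqa}) and substitutes the sharper estimate $S_k(p^l,a,-u)\ll p^{l-1}$, extracted from the proof of Vaughan's Theorem~7.1, in place of $p^{l(1-1/k)+\varepsilon}$, and then repeats the same summation over $u$. You instead perform a direct one-step Hensel lift on the three-variable sum $S(p^l,a)$ itself, reducing to a count of $\bfr_0\pmod{p^{l-1}}$ with $p\mid T(\bfr_0)$. Your approach is a little more self-contained, since it avoids appealing to the internals of Vaughan's argument, and it delivers $p^{3l-1}$ cleanly without the stray factor of $l$ that the paper's ``same procedure'' literally produces via $\sum_u(u,p^l)\asymp lp^l$. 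On the other hand, the paper's method treats all primes uniformly, whereas you must dispose of the finitely many primes $p\mid 3k$ separately by the trivial bound. Both arguments are valid and of comparable length.
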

\begin{proof}
Note that Vaughan \cite[Theorem 7.1]{Vau} yields the bound $S(p^{l},a,-u)\ll p^{l(1-1/k)+\varepsilon}$. Therefore, an application of this estimate and Theorem 4.2 of \cite{Vau} to equation (\ref{Sqa}) gives
\begin{equation*}S(p^{l},a)\ll p^{2l-l/k+\varepsilon}\sum_{u=1}^{p^{l}}(u,p^{l})\ll lp^{3l-l/k+\varepsilon}.\end{equation*}
Observe that we can also deduce the bound $S(p^{l},a,-u)\ll p^{l-1}$ from the proof\footnote{See in particular the argument following Vaughan \cite[(7.16)]{Vau}} of Vaughan \cite[Theorem 7.1]{Vau}, so the application of this estimate instead and the same procedure delivers $S(p^{l},a)\ll p^{3l-1}$.
\end{proof}
When $p$ is prime we can provide a more precise description of $S(p,a)$ by involving the sum $S_{k}(p,a)$ in its expression. Despite not using this refinement in the memoir, we have included such analysis for future work.
\begin{lem}\label{lem3.4}
Let $p$ be a prime number and $a\in\mathbb{Z}$ with $(a,p)=1$. Then,
$$S(p,a)=p^{2}S_{k}(p,a)+O(p^{2}).$$ In particular, one has the bound $S(p,a)\ll p^{5/2}.$
\end{lem}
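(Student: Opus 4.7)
The plan is to exploit the decomposition (\ref{Sqa}) specialised to $q=p$ prime and to isolate the contribution of the term $u\equiv 0\pmod p$. Writing
$$S(p,a)=p^{-1}\sum_{u=1}^{p}S_{3}(p,u)^{3}S_{k}(p,a,-u),$$
the term $u=p$ yields $S_{3}(p,p)=p$ and $S_{k}(p,a,0)=S_{k}(p,a)$, contributing precisely $p^{-1}\cdot p^{3}\cdot S_{k}(p,a)=p^{2}S_{k}(p,a)$, which is the claimed main term. The task then reduces to showing that the remaining sum over $1\le u\le p-1$ contributes $O(p^{2})$.

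For these remaining terms one has $(u,p)=1$, and the plan is to invoke Weil's square-root bound for complete exponential sums over $\mathbb{F}_{p}$ in both factors. For the cubic sum, Weil (or, equivalently, the classical evaluation via Gauss/Kummer sums) gives $|S_{3}(p,u)|\ll p^{1/2}$; indeed it vanishes identically when $p\equiv 2\pmod 3$, since $r\mapsto r^{3}$ is then a bijection of $\mathbb{F}_{p}$. For the other factor, the phase $ar^{k}-ur$ has degree $k\ge 2$ with $(a,p)=1$, so Weil yields $|S_{k}(p,a,-u)|\ll p^{1/2}$ uniformly in $u$ (for $p\nmid k$; the finitely many exceptional primes are absorbed into the implicit constant). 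Combining these,
$$p^{-1}\sum_{u=1}^{p-1}|S_{3}(p,u)|^{3}\,|S_{k}(p,a,-u)|\ll p^{-1}\cdot (p-1)\cdot p^{3/2}\cdot p^{1/2}\ll p^{2},$$
as required.

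For the concluding bound $S(p,a)\ll p^{5/2}$, a further application of Weil gives $|S_{k}(p,a)|\ll p^{1/2}$, so the main term satisfies $p^{2}S_{k}(p,a)\ll p^{5/2}$ and the error $O(p^{2})$ is subsumed. There is no serious obstacle here: the main point to be careful about is that one must draw on full square-root cancellation simultaneously in both factors, since the weaker estimate $|S_{k}(p,a,-u)|\ll p^{1-1/k+\varepsilon}$ used in Lemma \ref{Sa} would yield only $p^{5/2-1/k+\varepsilon}$ for the error, which is insufficient when $k\ge 3$.
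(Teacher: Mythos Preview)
Your proof is correct and follows essentially the same approach as the paper's own argument: both isolate the $u\equiv 0$ term in the decomposition (\ref{Sqa}) to produce the main term $p^{2}S_{k}(p,a)$, and both bound the remaining sum over $1\le u\le p-1$ by combining square-root cancellation for $S_{3}(p,u)$ with the Weil bound for $S_{k}(p,a,-u)$. The only cosmetic difference is that the paper cites Vaughan \cite[Lemma~4.3]{Vau} for the pure power sums $S_{3}(p,u)$ and $S_{k}(p,a)$ rather than Weil directly, but the underlying estimates are the same.
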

\begin{proof}
By equation (\ref{Sqa}) it follows that $S(p,a)=p^{2}S_{k}(p,a)+E$, where $$E=p^{-1}\sum_{1\leq u\leq p-1}S_{3}(p,u)^{3}S_{k}(p,a,-u).$$
Using Vaughan \cite[Lemma 4.3]{Vau} to bound $S_{3}(p,u)$ and the work of Weil\footnote{See Schmidt \cite[Corollary 2F]{Sch} for an elementary proof of this bound.} \cite{Wei2} to bound $S_{k}(p,a,-u)$ we obtain the estimate $E\ll p^{2}$. Consequently, another application of the aforementioned lemma of Vaughan \cite{Vau} to $S_{k}(p,a)$ delivers $S(p,a)\ll p^{5/2}.$
\end{proof}
The reader may notice that this result is best possible since whenever $(k,p-1)>1$ then there is a positive proportion of positive integers $a\leq p$ for which $S_{k}(p,a)\gg p^{1/2}$, whence the above result delivers an asymptotic formula in those situations. It seems unclear whether the error term in the formula could be improved. Such improvement though would not have any impact in our work. For future purposes, it is convenient to define, for each $q\in\mathbb{N}$, the exponential sums
\begin{equation}\label{kkk}
S_{n}(q)=\sum_{\substack{a=1\\ (a,q)=1}}^{q}\big(q^{-3}S(q,a)\big)^{s}e_{q}(-na),
\ \ \ \ S_{s}^{*}(q)=\displaystyle\sum_{\substack{a=1\\ (a,q)=1}}^{q}\big\lvert q^{-3}S(q,a)\big\rvert^{s}\end{equation} 
and to analyse their behaviour when summing over $q$.
\begin{lem}\label{cor4}
Let $s\geq \max(4,k+1)$. One has
\begin{equation}\label{ssn}\sum_{q\leq Q}S_{s}^{*}(q)\ll Q^{\varepsilon}\ \ \ \ \ \ \ \text{and}\ \ \ \ \ \ \ \sum_{q\leq Q}\lvert S_{n}(q)\rvert\ll Q^{\varepsilon},\end{equation} and for $s\geq \max(5, k+2)$ it follows that
\begin{equation}\label{Sas}\sum_{q\leq Q}q^{1/k}\lvert S_{n}(q)\rvert\ll Q^{\varepsilon}\ \ \ \ \ \ \ \text{and}\ \ \ \ \ \ \ \sum_{q>Q}\lvert S_{n}(q)\rvert\ll Q^{\varepsilon-1/k}.\end{equation}
\end{lem}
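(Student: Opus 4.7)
The plan is to exploit the multiplicativity of $S_n(q)$ and $S_s^*(q)$, reduce to bounds on their Euler factors at prime powers, and apply the pointwise estimates of Lemmas \ref{Sa} and \ref{lem3.4}. A routine application of the Chinese Remainder Theorem shows that whenever $(q_1,q_2)=1$ the sum $S(q_1q_2,a)$ factorises as a product $S(q_1,a')S(q_2,a'')$ for appropriate residues $a'$ and $a''$ coprime to $q_1$ and $q_2$ respectively, and the additive phase $e_q(-na)$ splits in the same way; hence both $S_n(q)$ and $S_s^*(q)$ are multiplicative in $q$. Since the triangle inequality gives $|S_n(q)|\le S_s^*(q)$, the bounds on the $|S_n|$-sums in (\ref{ssn}) and (\ref{Sas}) will follow once the unweighted and weighted sums of $S_s^*(q)$ are controlled.

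To bound the local factors I would use the following pointwise estimates. Lemma \ref{lem3.4} yields $|p^{-3}S(p,a)|\ll p^{-1/2}$ and hence $S_s^*(p)\ll p^{1-s/2}$. At prime powers $p^l$ with $l\ge 2$, Lemma \ref{Sa} supplies the pair of alternatives $|p^{-3l}S(p^l,a)|\ll \min(p^{-1},\,lp^{-l/k+\varepsilon})$; I would use the first bound when $2\le l\le k$, obtaining $S_s^*(p^l)\ll p^{l-s}$, and the second when $l>k$, obtaining $S_s^*(p^l)\ll l^s p^{l(1-s/k)+\varepsilon}$. Under the hypothesis $s\ge\max(4,k+1)$, the three ranges contribute respectively $p^{1-s/2}$, $p^{k-s}$ and $p^{-(k+1)/k+\varepsilon}$, each of which is $O(p^{-1})$. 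Multiplicativity then delivers
$$\sum_{q\le Q}S_s^*(q)\le \prod_{p\le Q}\Bigl(1+\sum_{l\ge 1}S_s^*(p^l)\Bigr)\ll\prod_{p\le Q}\bigl(1+O(p^{-1})\bigr)\ll(\log Q)^{O(1)}\ll Q^{\varepsilon},$$
which establishes (\ref{ssn}).

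For (\ref{Sas}) the same argument is repeated with the extra weight $p^{l/k}$ inserted into each Euler contribution. A short arithmetic check reveals that the strengthened hypothesis $s\ge\max(5,k+2)$ is precisely what is needed so that each of the three weighted contributions is again $O(p^{-1})$, whence the first bound of (\ref{Sas}) follows by the same Euler-product argument. The tail bound is then an immediate consequence of the weighted one:
$$\sum_{q>Q}|S_n(q)|\le Q^{-1/k}\sum_{q>Q}q^{1/k}|S_n(q)|\ll Q^{\varepsilon-1/k}.$$

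The principal technical obstacle is the careful bookkeeping in the Euler factor: for each prime power $p^l$ one must select the sharper of the two alternatives in Lemma \ref{Sa}, and the balance at $l\approx k$---where neither alternative strictly dominates---is exactly what forces the numerical thresholds $s\ge k+1$ and $s\ge k+2$ appearing in the statement. Once that case analysis is done, the remaining estimations reduce to the convergence of standard geometric-type series.
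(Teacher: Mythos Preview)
Your approach is essentially the same as the paper's: reduce to $S_s^*$ via $|S_n(q)|\le S_s^*(q)$, use multiplicativity, bound the local factors at prime powers via Lemmas \ref{Sa} and \ref{lem3.4} (splitting into the ranges $l=1$, $2\le l\le k$, $l>k$ exactly as you do), and then assemble the Euler product $\prod_{p\le Q}(1+O(1/p))\ll Q^\varepsilon$. The weighted version is handled the same way with the extra factor $p^{l/k}$.

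There is, however, a genuine gap in your deduction of the tail estimate. You write
\[
\sum_{q>Q}|S_n(q)|\le Q^{-1/k}\sum_{q>Q}q^{1/k}|S_n(q)|\ll Q^{\varepsilon-1/k},
\]
implicitly asserting that $\sum_{q>Q}q^{1/k}|S_n(q)|\ll Q^{\varepsilon}$. But the first estimate in (\ref{Sas}) concerns only the \emph{partial} sums $\sum_{q\le Q}$, and the full series $\sum_q q^{1/k}S_s^*(q)$ can diverge: at the borderline $(k,s)=(2,5)$ one has $p^{1/k}S_s^*(p)\asymp p^{-1}$ for every odd prime (since $|p^{-3}S(p,a)|\asymp p^{-1/2}$ via the Gauss sum), so the Euler product $\prod_p(1+c/p)$ is infinite. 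Thus the tail $\sum_{q>Q}q^{1/k}|S_n(q)|$ is not known to be finite, let alone $O(Q^\varepsilon)$.

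The paper avoids this by a dyadic decomposition: from the first bound in (\ref{Sas}) one gets $\sum_{Q\le q\le 2Q}|S_n(q)|\le Q^{-1/k}\sum_{q\le 2Q}q^{1/k}|S_n(q)|\ll Q^{\varepsilon-1/k}$, and then summing the geometric series over dyadic blocks $[2^jQ,2^{j+1}Q]$ yields the tail bound. Your argument is easily repaired along these lines.
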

\begin{proof}
To show (\ref{ssn}) it suffices to prove the bound for $S_{s}^{*}(q)$ since $\lvert S_{n}(q)\rvert\leq S_{s}^{*}(q)$. Applying Lemmata \ref{Sa} and \ref{lem3.4} we deduce trivially that each of $S_{n}(p)$ and $S_{s}^{*}(p)$ is $O(p^{1-s/2}),$ and each of $S_{n}(p^{l})$ and $S_{s}^{*}(p^{l})$ is $O\big(\min(p^{l-s},l^{s}p^{l-ls/k+\varepsilon})\big)$ when $l\geq 2$. Consequently, whenever $s\geq \max(4,k+1)$ then using the fact that $S_{s}^{*}(q)$ is multiplicative we find that
\begin{align*}\sum_{q\leq Q}S_{s}^{*}(q)\ll \prod_{p\leq Q}\big(1+\sum_{l=1}^{\infty}S_{s}^{*}(p^{l})\big)\ll \prod_{p\leq Q}(1+C/p)\ll Q^{\varepsilon},
\end{align*}
where $C>0$ is some suitable constant. The first assertion of (\ref{Sas}) follows by the same argument, and the second follows observing that then $$\sum_{Q\leq q\leq 2Q}\lvert S_{n}(q)\rvert\ll Q^{\varepsilon-1/k},$$ whence summing over dyadic intervals we obtained the desired result.
\end{proof}
\section{Singular series}\label{sec4}
We give sufficient conditions in terms of the number of variables to ensure the local solubility of the problem and combine such work with the bounds obtained in the previous section to introduce and analyse the singular series associated to the problem. We also include a brief proof of Theorem \ref{thm1.4}. For such purposes, a little preparation is required. Let $p$ a prime number and take $\tau\geq 0$ such that $p^{\tau}|| 3k$. Let $\gamma=2\tau+1,$ consider the set
\begin{equation}\label{after}\mathcal{M}_{n}(p^{h})=\Big\{\mathbf{Y}\in [1,p^{h}]^{3s}:\ \sum_{i=1}^{s}T(\bfy_{i})^{k}\equiv n\pmod{p^{h}}\Big\},\end{equation} where $\mathbf{Y}=(\mathbf{y}_{1},\ldots,\mathbf{y}_{s})$ with $\mathbf{y}_{i}\in\mathbb{N}^{3},$ and the subset $$\mathcal{M}_{n}^{*}(p^{h})=\Big\{\mathbf{Y}\in \mathcal{M}_{n}(p^{h}):\  p\nmid y_{1,1},\ p\nmid T(\mathbf{y}_{1})\Big\},$$ 
where $\mathbf{y}_{1}=(y_{1,1},y_{1,2},y_{1,3}).$ Define as well the quantities $M_{n}(p^{h})=\lvert\mathcal{M}_{n}(p^{h})\rvert$ and ${M}_{n}^{*}(p^{h})=\lvert\mathcal{M}_{n}^{*}(p^{h})\rvert.$
Here the reader may want to observe that the divisibility restrictions on the above definition are imposed for a latter application of Hensel's Lemma. Before showing that under some constraint in the number of variables then ${M}_{n}^{*}(p^{\gamma})>0$, we first provide an accurate description of the set
\begin{equation*}\mathcal{M}_{3,3}(p^{h})=\Big\{T(\bfx):\ \bfx\in\Big(\mathbb{Z}/p^{h}\mathbb{Z}\Big)^{3},\ (x_{1},p)=1\Big\}\end{equation*} that will be used throughout the whole argument. 
\begin{lem}\label{lem11.3}
Let $h\in\mathbb{N}.$ Then, whenever $p\neq 3$ one finds that \begin{equation}\label{M33}\mathcal{M}_{3,3}(p^{h})=\mathbb{Z}/p^{h}\mathbb{Z}.\end{equation} For the case $p=3$ one has $\mathcal{M}_{3,3}(3)=\mathbb{Z}/3\mathbb{Z}$ and when $h\geq 2$ then $$\mathcal{M}_{3,3}(3^{h})=\Big\{x\in \mathbb{Z}/3^{h}\mathbb{Z}:\ \ x\not\equiv 4\mmod{9},\ x\not\equiv 5\mmod{9}\Big\}.$$
\end{lem}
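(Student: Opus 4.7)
The plan is to split into the two cases $p \neq 3$ and $p = 3$, and in each case to reduce the analysis modulo $p^h$ to a direct computation at a small power of $p$ via Hensel-type lifting. The common thread is that the partial derivative $\partial T / \partial x_1 = 3 x_1^2$ controls the lifting of solutions in the first variable, and the restriction $(x_1, p) = 1$ in the definition of $\mathcal{M}_{3,3}(p^h)$ is exactly what makes that derivative easy to handle.

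For $p \neq 3$, the quantity $3 x_1^2$ is a $p$-adic unit whenever $(x_1, p) = 1$, so Hensel's lemma guarantees that any solution to $T(\mathbf{x}) \equiv a \pmod p$ with $(x_1, p) = 1$ lifts to a solution modulo every $p^h$. Hence it suffices to prove $\mathcal{M}_{3,3}(p) = \mathbb{Z}/p\mathbb{Z}$, and I would do this in two sub-cases. For $p \equiv 2 \pmod 3$ (which includes $p = 2$), cubing is a bijection on $\F_p^*$, so $\{x_1^3 : x_1 \in \F_p^*\} = \F_p^*$, while the residue class of $0$ is obtainable as a sum of three cubes with a nonzero first coordinate. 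For $p \equiv 1 \pmod 3$ I would invoke the Weil bound $|\sum_{x \in \F_p} e_p(t x^3)| \leq 2\sqrt{p}$ valid for $t \not\equiv 0 \pmod p$ to deduce
$$\#\{\mathbf{x} \in \F_p^3 : T(\mathbf{x}) \equiv a \pmod p\} = p^2 + O(p^{3/2}),$$
which for $p$ sufficiently large exceeds the $O(p)$ contribution from tuples with $x_1 = 0$; the finitely many small primes $p \equiv 1 \pmod 3$ are then handled by direct enumeration of the cubes.

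For $p = 3$ I would first compute $\mathcal{M}_{3,3}(3)$ and $\mathcal{M}_{3,3}(9)$ by hand. Since $x^3 \equiv x \pmod 3$, the case $h = 1$ reduces to $x_1 + x_2 + x_3$ ranging over all of $\mathbb{Z}/3\mathbb{Z}$ under $(x_1, 3) = 1$. The cubes modulo $9$ are $\{0, 1, 8\}$, so $(x_1, 3) = 1$ forces $x_1^3 \in \{1, 8\}$, and a short enumeration of sums yields $\mathcal{M}_{3,3}(9) = \{0, 1, 2, 3, 6, 7, 8\}$, missing exactly $\{4, 5\}$. The forward inclusion $\mathcal{M}_{3,3}(3^h) \subseteq \{x : x \not\equiv 4, 5 \pmod 9\}$ for $h \geq 2$ is immediate by reduction mod $9$. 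For the reverse inclusion I would induct on $h$: given a representation $T(\mathbf{x}) \equiv a \pmod{3^h}$ with $h \geq 2$, write $T(\mathbf{x}) = a + 3^h m$ and update $x_1 \mapsto x_1 + 3^{h-1} t$. Since $h \geq 2$ forces $2h - 1 \geq h + 1$, one finds
$$(x_1 + 3^{h-1} t)^3 \equiv x_1^3 + 3^h x_1^2 t \pmod{3^{h+1}},$$
and choosing $t \equiv -m x_1^{-2} \pmod 3$ annihilates the $3^h$-level obstruction, while the updated $x_1$ remains coprime to $3$ because $h - 1 \geq 1$.

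The main obstacle is the sub-case $p \equiv 1 \pmod 3$: cubing is $3$-to-$1$ on $\F_p^*$, so cubes form a subgroup of index three, and neither a bijectivity argument nor a bare application of Cauchy--Davenport suffices (the latter yields only $|C^{*} + C + C| \geq p - 1$, falling one residue short). The Weil-bound route is asymptotically clean but unavoidably forces a finite amount of case-checking for the small primes where the error term does not beat the main term, and this bookkeeping is the only point in the argument requiring care beyond routine lifting.
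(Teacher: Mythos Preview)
Your proposal is correct and follows the same overall architecture as the paper: reduce to a small power of $p$ via Hensel-type lifting, invoke Weil bounds for $p\neq 3$, and handle $p=3$ by direct computation. The differences are in execution rather than strategy. For $p\neq 3$ the paper does not split on $p\bmod 3$; instead it homogenises to the four-variable form $y_1^3+y_2^3+y_3^3-ny_4^3\equiv 0$ and applies Weil's point-count to obtain the sharper error $N_n(p)=p^2+O(p)$, so that only $p=2,5$ need separate verification, whereas your three-variable character-sum route gives error $O(p^{3/2})$ and leaves a longer (but still finite) list of primes $p\equiv 1\pmod 3$ to check by hand. Conversely, for $p=3$ your explicit increment $x_1\mapsto x_1+3^{h-1}t$ lets you lift directly from $h=2$, which is cleaner than the paper's route of verifying the full list modulo $27$ before invoking the standard Hensel lemma (the extra step there being forced by $v_3(3x_1^2)=1$). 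Both trade-offs are cosmetic; the argument is sound either way.
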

\begin{proof}
When $p\neq 3$, we can assume that $h=1$, since an application of Hensel's Lemma would then yield the case $h\geq 2$. For a better description of the argument, it is convenient to define the counting functions 
\begin{equation*}N_{n}(p)={\rm card}\Big\{\mathbf{x}\in\big(\mathbb{Z}/p\mathbb{Z}\big)^{3}:\ T(\mathbf{x})\equiv n\pmod{p}\Big\},\end{equation*}
\begin{equation*}N_{n,4}(p)={\rm card}\Big\{\mathbf{y}\in\big(\mathbb{Z}/p\mathbb{Z}\big)^{4}:\ y_{1}^{3}+y_{2}^{3}+y_{3}^{3}-ny_{4}^{3}\equiv 0\pmod{p}\Big\}.\end{equation*}Observe that by making a distinction for the tuples counted in $N_{n,4}(p)$ regarding the divisibility of $y_{4}$ by $p$ one has that $(p-1)N_{n}(p)=N_{n,4}(p)-N_{0}(p)$. Note that when $(n,p)=1$ then the work of Weil \cite{Wei} on equations over finite fields leads to
$$\lvert N_{n,4}(p)-p^{3}\rvert\leq 6(p-1)p\ \ \ \ \ \ \ \text{and}\ \ \ \ \ \ \ \lvert N_{0}(p)-p^{2}\rvert\leq 2(p-1)\sqrt{p}.$$ Consequently, one finds that $N_{n}(p)=p^{2}+E_{p}$ with $\lvert E_{p}\rvert\leq 6p+2\sqrt{p},$
and hence $N_{n}(p)\geq 1$ for $p\geq 7$. Observe as well that $N_{n}(2)\geq 1$ and $N_{n}(5)\geq 1$ follow trivially. This implies that there is at least one solution to the equation \begin{equation}\label{ec11.8}x^{3}+y^{3}+z^{3}\equiv n\pmod{p},\ \ \ \ \ \ (x,p)=1,\end{equation} and when $n=0$ then $(1,-1,0)$ is also a solution for (\ref{ec11.8}), whence the preceding discussion yields (\ref{M33}).

When $p=3$ then the case $h=1$ is trivial. Note that since cubes can only be $\pm 1\pmod{9}$, the only residues which cannot be written as sums of three cubes are $4$ and $5$. For $h=3$, a slightly tedious computation reveals that the only residues not represented as sums of three cubes are the ones congruent to $4$ or $5\pmod{9}$. Therefore, a routine application of Hensel's Lemma delivers the proof for $h\geq 4.$

\end{proof}

The previous lemma asserts that the local solubility of the problem studied here only differs from the local solubility of the original Waring's problem at the prime $3.$ This conclusion is gathered in the following statement.
\begin{lem}\label{lem9.1}
Suppose that $s\geq \frac{p}{p-1}\big(k,p^{\tau}(p-1)\big)$ when $p\neq 2,3$ or $p=2$ and $\tau=0$, that $s\geq \frac{9}{4}\big(k,\phi(3^{\gamma})\big)$ when $p=3$, that $s\geq 2^{\tau+2}$ when $p=2$ and $\tau>0$ with $k>2$, and that $s\geq 5$ when $p=k=2.$ Then one has $M_{n}^{*}(p^{\gamma})>0.$
\end{lem}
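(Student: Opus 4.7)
The plan is to reduce $M_n^*(p^\gamma) > 0$ to the local solvability of the classical Waring problem via Lemma \ref{lem11.3}, and to rely on the standard Hensel-lifting threshold $\gamma = 2\tau+1$. The restrictions $p \nmid y_{1,1}$ and $p \nmid T(\mathbf{y}_1)$ built into $\mathcal{M}_n^*(p^\gamma)$ are exactly those required for Hensel: the partial derivative $3k\,T(\mathbf{y}_1)^{k-1}y_{1,1}^2$ of $T(\mathbf{y}_1)^k$ with respect to $y_{1,1}$ has exact $p$-adic valuation $\tau$, so any solution modulo $p^\gamma$ lifts to solutions modulo every higher power. It therefore suffices to exhibit a single point of $\mathcal{M}_n^*(p^\gamma)$.

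For $p \neq 3$ (including $p = 2$ with $\tau = 0$), Lemma \ref{lem11.3} supplies $\mathcal{M}_{3,3}(p^\gamma) = \mathbb{Z}/p^\gamma\mathbb{Z}$, so every element of $\mathbb{Z}/p^\gamma\mathbb{Z}$ arises as $T(\mathbf{y})$ with $(y_1, p) = 1$. Hence producing a point of $\mathcal{M}_n^*(p^\gamma)$ reduces to finding $z_1,\ldots,z_s \in \mathbb{Z}/p^\gamma\mathbb{Z}$ with $p \nmid z_1$ and $\sum_{i=1}^s z_i^k \equiv n \pmod{p^\gamma}$. This is the classical local Waring problem, and the three thresholds $s \geq \frac{p}{p-1}(k,p^\tau(p-1))$ for odd $p$, and $s \geq 2^{\tau+2}$ or $s \geq 5$ for $p = 2$ in the respective subcases, are precisely those furnished by standard treatments such as Vaughan, \emph{The Hardy-Littlewood Method}, Chapter 2.

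The principal obstacle is the case $p = 3$: by Lemma \ref{lem11.3}, the image of $T$ modulo $3^\gamma$ omits residues $\equiv 4$ or $5 \pmod 9$ whenever $\gamma \geq 2$, so $T$ fails to surject. A direct count gives $\frac{4}{9}\cdot 3^\gamma$ units of $\mathbb{Z}/3^\gamma\mathbb{Z}$ in the image of $T$, i.e.\ a proportion $2/3$ of all units. The $k$-th power map on the cyclic group $(\mathbb{Z}/3^\gamma\mathbb{Z})^*$ has image of index $d = (k, \phi(3^\gamma))$, and a sumset argument then shows that every residue $n \pmod{3^\gamma}$ can be written as $\sum_{i=1}^s z_i^k$ with each $z_i$ in the image of $T$ and $3 \nmid z_1$ provided $s \geq \frac{9}{4}d$; the factor $9/4$ is the reciprocal of the density of $T$-admissible units, and quantifies how many extra summands compensate for the two forbidden residue classes. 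Taking $s \geq \frac{9}{4}(k,\phi(3^\gamma))$ therefore produces the required base case modulo $3^\gamma$, which Hensel then lifts to complete the proof.
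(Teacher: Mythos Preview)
Your proof is correct and follows essentially the same route as the paper: reduce via Lemma~\ref{lem11.3} to the classical Waring local solubility problem for $p\neq 3$ and invoke Vaughan's standard bounds, while for $p=3$ count the $T$-admissible units as $4\cdot 3^{\gamma-2}$ and feed this into the same Cauchy--Davenport/Vaughan~[Lemma~2.14] machinery to obtain the threshold $s\ge \tfrac{9}{4}(k,\phi(3^{\gamma}))$. The paper cites Vaughan's Lemmas~2.13--2.15 explicitly where you appeal to ``standard treatments'' and a ``sumset argument'', but the content is identical.
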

\begin{proof}
If $p\neq 3$ then Lemma \ref{lem11.3} implies that the local solubility for each of these primes is equivalent to that of the original Waring's problem, hence Vaughan \cite[Lemma 2.15]{Vau} yields $M_{n}^{*}(p^{\gamma})>0.$ Here the reader might want to observe that the definition for $\gamma$ taken here is different from the one in Vaughan \cite[(2.25)]{Vau}, so one may have to apply Lemma 2.13 of \cite{Vau} as well. For the case $p=3$, Lemma \ref{lem11.3} delivers $$\Big\lvert\mathcal{M}_{3,3}(3^{\gamma})\cap U(\mathbb{Z}/3^{\gamma}\mathbb{Z})\Big\rvert=4\cdot 3^{\gamma-2},$$ where $U(\mathbb{Z}/3^{\gamma}\mathbb{Z})$ denotes the group of units of $\mathbb{Z}/3^{\gamma}\mathbb{Z}$. Therefore, using Vaughan \cite[Lemma 2.14]{Vau} we get that
$M_{n}^{*}(3^{\gamma})>0$ whenever $s\geq \frac{9}{4}\big(k,\phi(3^{\gamma})\big).$ 
\end{proof}

Observe that by the combination of Lemma \ref{lem11.3} and Vaughan \cite[Lemma 2.14]{Vau} then for $u\geq 9k/4$ we find that the form $T(\mathbf{x}_{1})^{k}+\dots+T(\mathbf{x}_{u})^{k}$ covers all the residue classes modulo $3^{k}$. Take now $s$ such that every sufficiently large number can be written as a sum of $s$ integral $3k$-th powers. Given a large integer $n$, we can find integral triples $\mathbf{x}_{1},\dots, \mathbf{x}_{u}$ for which $n\equiv T(\mathbf{x}_{1})^{k}+\dots+T(\mathbf{x}_{u})^{k}\mmod{3^{k}}$ and $1\leq \mathbf{x}_{i}\leq 3^{k}.$ Fixing any one such choice of the $\mathbf{x}_{i}$, we can also find integers $x_{1},\dots,x_{s}$ satisfying
$$x_{1}^{3k}+\dots+x_{s}^{3k}=3^{-k}\Big(n-\big(T(\mathbf{x}_{1})^{k}+\dots+T(\mathbf{x}_{u})^{k}\big)\Big).$$Here the reader may find convenient to observe that the term on the right side of the equality is still large. Therefore, we obtain the representation
$$n=T(\mathbf{x}_{1})^{k}+\dots+T(\mathbf{x}_{u})^{3k}+3^{k}\big(x_{1}^{3k}+\dots+x_{s}^{3k}\big).$$ Noting that the sums of three cubes on the right side have been replaced by the specialization $3x^{3}$, one gets $G_{3}(k)\leq u+s,$ and hence by Wooley \cite[Corollary 1.2.1]{Woo7} we have that $G_{3}(k)\leq 3k\big(\log k+\log\log k+O(1)\big),$ which yields Theorem \ref{thm1.4}. As experts will realise, one could apply the ideas of Wooley \cite{Woo5} to obtain a refinement of the shape $$G_{3}(k)\leq 3k\Bigg(\log k+\log\log 3k+\log 3+2+O\Big(\frac{\log\log k}{\log k}\Big)\Bigg)$$ by using instead other specializations. For the sake of brevity though we omit making such analysis here.

Finally we combine work of this section with estimates from the previous one to analyse the singular series. Observe that by (\ref{kkk}) we can rewrite the singular series, defined in (\ref{1A}), as
$$\mathfrak{S}(n)=\sum_{q=1}^{\infty}S_{n}(q).$$ To express the above series as a product of local densities it is convenient to define the infinite sum$$\sigma(p)=\sum_{l=0}^{\infty}S_{n}(p^{l})$$ for each prime $p$. 
\begin{prop}\label{prop66}
Let $s\geq \max(5,k+2)$. Then, one has 
\begin{equation}\label{hr}\mathfrak{S}(n)=\prod_{p}\sigma(p),\end{equation}
the series $\mathfrak{S}(n)$ converges absolutely and $\frak{S}(n)\ll 1$. Moreover, if $s$ satisfy the conditions of Lemma \ref{lem9.1} one gets $\mathfrak{S}(n)\gg 1.$ 
\end{prop}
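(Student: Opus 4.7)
Proof proposal. The plan is to treat the three assertions separately: (i) multiplicativity of $S_n$ yielding the Euler product, and absolute convergence and the bound $\mathfrak{S}(n) \ll 1$ via Lemma \ref{cor4}; (ii) positivity of each local factor $\sigma(p)$ via Hensel-lifting of non-singular solutions; (iii) control of the tail of the Euler product to obtain the absolute lower bound $\mathfrak{S}(n) \gg 1$.

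First I would verify that $q \mapsto S_n(q)$ is multiplicative. For coprime $q_1, q_2$ the Chinese Remainder Theorem parametrises jointly the residue $a$ and the triples $\mathbf{r}$, yielding the standard factorisation $S(q_1 q_2, a) = S(q_1, a \bar{q}_2) S(q_2, a \bar{q}_1)$ which, after insertion into (\ref{kkk}), gives $S_n(q_1 q_2) = S_n(q_1) S_n(q_2)$. Combined with the estimate $\sum_{q > Q} |S_n(q)| \ll Q^{\varepsilon - 1/k}$ from Lemma \ref{cor4}, this immediately yields both the Euler product (\ref{hr}) and the upper bound $\mathfrak{S}(n) \ll 1$.

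Next, to access the local factors analytically I would identify $\sigma(p)$ with a $p$-adic density. Starting from the orthogonality relation $\sum_{a=0}^{p^h - 1} e_{p^h}(am) = p^h \mathbf{1}_{p^h \mid m}$ applied to $m = \sum_i T(\mathbf{y}_i)^k - n$ and splitting the $a$-sum according to $(a, p^h) = p^{h-j}$, a routine computation will produce
\begin{equation*}
\sum_{j=0}^{h} S_n(p^j) = p^{h(1-3s)} M_n(p^h),
\end{equation*}
and hence $\sigma(p) = \lim_{h \to \infty} p^{h(1-3s)} M_n(p^h)$. Since $\partial/\partial y_{1,1} \sum_i T(\mathbf{y}_i)^k = 3k\, y_{1,1}^2 T(\mathbf{y}_1)^{k-1}$, the conditions $p \nmid y_{1,1}$ and $p \nmid T(\mathbf{y}_1)$ imposed in the definition of $\mathcal{M}_n^*(p^\gamma)$ force the $p$-adic valuation of this derivative to be exactly $\tau$. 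Since $\gamma = 2\tau + 1 > 2\tau$, Hensel's Lemma lifts every solution in $\mathcal{M}_n^*(p^\gamma)$ to exactly $p^{(h-\gamma)(3s-1)}$ solutions in $\mathcal{M}_n(p^h)$ for each $h \geq \gamma$, so that
\begin{equation*}
\sigma(p) \geq p^{-\gamma(3s-1)} M_n^*(p^\gamma),
\end{equation*}
which is strictly positive for every prime by Lemma \ref{lem9.1}.

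Finally, to infer $\mathfrak{S}(n) \gg 1$ uniformly in $n$, I would quantify the tail of the Euler product. Invoking $|S_n(p)| \ll p^{1-s/2}$ and $|S_n(p^l)| \ll \min(p^{l-s}, l^s p^{l - ls/k + \varepsilon})$ from the proof of Lemma \ref{cor4}, using the first bound for $l < k$ and the second for $l \geq k$ delivers $|\sigma(p) - 1| \ll_k p^{-1-\delta}$ for some $\delta > 0$ when $s \geq \max(5, k+2)$, so $\prod_{p > P_0} \sigma(p) \geq 1/2$ for a sufficiently large absolute constant $P_0 = P_0(k)$. The remaining factors satisfy $\prod_{p \leq P_0} \sigma(p) \geq \prod_{p \leq P_0} p^{-\gamma(3s-1)} > 0$ uniformly in $n$, by the integrality $M_n^*(p^\gamma) \geq 1$, and the conclusion follows. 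I expect the main technical obstacle to be the Hensel-lifting count, where the matching between the valuation of the derivative, the choice $\gamma = 2\tau+1$, and the divisibility conditions built into $\mathcal{M}_n^*(p^\gamma)$ must be handled carefully, in particular at the primes $p = 2$ and $p = 3$ where the combinatorial conditions of Lemma \ref{lem9.1} take a non-standard form.
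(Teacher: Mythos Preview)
Your proposal is correct and follows essentially the same route as the paper's proof: multiplicativity of $S_n$ together with the prime-power bounds from Lemma~\ref{cor4} give the Euler product, absolute convergence and $\mathfrak{S}(n)\ll 1$; then the orthogonality identity $\sum_{l=0}^{h}S_n(p^l)=p^{h(1-3s)}M_n(p^h)$, Hensel lifting from $\mathcal{M}_n^*(p^\gamma)$, and the tail estimate $|\sigma(p)-1|\ll p^{-3/2}$ yield $\sigma(p)\geq p^{-(3s-1)\gamma}$ and hence $\mathfrak{S}(n)\gg 1$. The only cosmetic difference is that the paper derives the bound $\sum_{l\geq 1}|S_n(p^l)|\ll p^{-3/2}$ once (equation~(\ref{ec4.1})) and reuses it for both the upper and lower bounds, whereas you invoke the global tail estimate of Lemma~\ref{cor4} for the upper bound and then re-derive the local estimate for the lower bound.
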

\begin{proof}
Using the estimates mentioned at the beginning of the proof of Lemma \ref{cor4} we find
\begin{equation}\label{ec4.1}\sum_{l=1}^{\infty}\lvert S_{n}(p^{l})\rvert\ll p^{1-s/2}+p^{k-s}+\sum_{l\geq k+1}l^{s}p^{l-ls/k+\varepsilon}\ll p^{-3/2}.\end{equation}
 Therefore, (\ref{hr}) holds by multiplicativity, $\mathfrak{S}(n)$ converges absolutely and $\mathfrak{S}(n)\ll 1.$ 
In order to prove the lower bound, we recall first (\ref{kkk}) and (\ref{after}) to deduce that orthogonality then yields
$$\sum_{l=0}^{h}S_{n}(p^{l})=M_{n}(p^{h})p^{h(1-3s)}.$$
 If $s$ satisfy the conditions of Lemma \ref{lem9.1}, an application of Hensel's Lemma gives the lower bound $M_{n}(p^{h})\geq p^{(3s-1)(h-\gamma)},$ which combined with the above equation implies that $\sigma(p)\geq p^{-(3s-1)\gamma}.$ Consequently, the previous estimate and (\ref{ec4.1}) deliver the lower bound $\mathfrak{S}(n)\gg 1$.
\end{proof}
\section{Approximation on the major arcs.}\label{sec6}
In this section we use a simple argument involving the Riemman-Stieltjes integral and integration by parts to approximate $f(\alpha)$ by an auxiliary function on the major arcs. We also provide bounds for this function. For such purposes, we introduce first some notation. Let $\alpha\in [0,1)$ and $a\in\mathbb{Z}$, $q\in\mathbb{N}$ with $(a,q)=1$. Denote $\beta=\alpha-a/q$ and define the aforementioned auxiliary function
\begin{equation}\label{vbet}V(\alpha,q,a)=q^{-3}S(q,a)v(\beta),\ \ \ \ \ \ \text{where}\ \ v(\beta)=\int_{[0,P]^{3}}e\big(\beta T(\mathbf{x})^{k}\big)d\mathbf{x}.\end{equation}
\begin{lem}\label{lem6}
Let $q<P$. Then one has that
\begin{equation*}\label{fVV}f(\alpha)=V(\alpha,q,a)+O\big(P^{2}q(1+n\lvert \beta\rvert)\big).\end{equation*}
\end{lem}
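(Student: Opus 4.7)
The plan is to perform a standard discrete-to-continuous approximation by first breaking the sum $f(\alpha)$ into residue classes modulo $q$, and then within each class replacing the sum by the corresponding integral through a threefold application of Abel summation (equivalently, integration by parts in the Riemann--Stieltjes sense).

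First, writing $\alpha=a/q+\beta$ and parametrising $\mathbf{y}=\mathbf{r}+q\mathbf{z}$ I would express
$$f(\alpha)=\sum_{1\leq \mathbf{r}\leq q}e_{q}\bigl(aT(\mathbf{r})^{k}\bigr)\,g_{\mathbf{r}}(\beta),\qquad g_{\mathbf{r}}(\beta)=\sum_{\substack{\mathbf{y}\in[1,P]^{3}\cap\mathbb{N}^{3}\\ \mathbf{y}\equiv\mathbf{r}\,(q)}}e\bigl(\beta T(\mathbf{y})^{k}\bigr).$$
The goal is to show $g_{\mathbf{r}}(\beta)=q^{-3}v(\beta)+O\bigl((P^{2}/q^{2})(1+n|\beta|)\bigr)$; summing against the phase $e_{q}(aT(\mathbf{r})^{k})$ and noting $\sum_{\mathbf{r}}1=q^{3}$ will then deliver the stated error $O(qP^{2}(1+n|\beta|))$.

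To prove the per-class estimate I would iterate the one-variable identity
$$\sum_{\substack{1\leq y\leq P\\ y\equiv r\,(q)}}\!\!h(y)=q^{-1}\!\int_{0}^{P}\!h(t)\,dt+O\!\Bigl(\,1+\int_{0}^{P}|h'(t)|\,dt\Bigr),$$
obtained by applying Riemann--Stieltjes integration by parts with the counting function $N(t)=t/q+O(1)$. Writing $h(\mathbf{y})=e(\beta T(\mathbf{y})^{k})$ and using $T(\mathbf{y})\ll P^{3}$ on $[0,P]^{3}$ gives the gradient bound
$$\bigl|\partial_{y_{i}}h(\mathbf{y})\bigr|\ll |\beta|\,k\,T(\mathbf{y})^{k-1}\,y_{i}^{2}\ll |\beta|\,P^{3k-1}.$$
Applying the 1D identity first in $y_{3}$ (at fixed $y_{1},y_{2}$) introduces an error $O(1+|\beta|P^{3k})=O(1+n|\beta|)$ since $n\asymp P^{3k}$; applying it next in $y_{2}$ adds the same type of error, multiplied by the $P/q$ outer terms, to yield $O((P/q)(1+n|\beta|))$; finally, applying it in $y_{1}$ and absorbing the factor $q^{-2}$ pulled out along the way produces the promised bound $O((P^{2}/q^{2})(1+n|\beta|))$ on $g_{\mathbf{r}}(\beta)-q^{-3}v(\beta)$. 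Since at each stage the intermediate function is a partial integral of $h$, its $y_{i}$-derivative can be controlled by interchanging differentiation and integration, so the same gradient bound $O(|\beta|P^{3k-1})$ governs every step.

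The main technical point, and the only place one has to be a little careful, is the bookkeeping of the three error contributions and the fact that at each intermediate stage the integrand against which we apply Abel summation is no longer a pure exponential but a partial integral of one; one must verify that its variation in the next variable is still controlled by the sup of $|\partial_{y_{i}}h|$ times the measure of the cross-section. Once that is in hand, summing against $e_{q}(aT(\mathbf{r})^{k})$ over the $q^{3}$ residue classes and recalling the definition of $V(\alpha,q,a)$ produces exactly the claimed estimate.
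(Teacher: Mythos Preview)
Your proposal is correct and follows essentially the same route as the paper: sort $f(\alpha)$ into residue classes modulo $q$, then apply Abel summation (Riemann--Stieltjes integration by parts) successively in each of the three variables to replace the inner sum by $q^{-3}v(\beta)$ with per-class error $O\bigl((P^{2}/q^{2})(1+n|\beta|)\bigr)$, and finally sum over the $q^{3}$ classes. The paper's argument is virtually identical (their $K_{\mathbf{r}}$ is your $g_{\mathbf{r}}$); your intermediate error narrative is slightly loosely phrased---at the second and third stages the partial integrals $H_{1},H_{2}$ have size $P,P^{2}$ rather than $1$, so the per-step errors pick up corresponding factors of $P$---but you explicitly flag the bookkeeping as the point requiring care, and the final bound is correct.
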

\begin{proof}
Before embarking on our task, it is convenient to define the sums
 \begin{equation*}K_{\mathbf{r}}(\beta)=\displaystyle\sum_{\substack{\mathbf{x}\leq P\\ \mathbf{x}\equiv\mathbf{r}\pmod{q}}}e\big(F_{\beta}(\mathbf{x})\big),\ \ \ \ \ \ \ B_{r}(x)=\displaystyle\sum_{\substack{0<z\leq x\\ z\equiv r\pmod{q}}}1,\end{equation*} where $F_{\beta}(\mathbf{x})=\beta T(\mathbf{x})^{k}$. Observe that by sorting the summation into arithmetic progressions modulo $q$ we find that
\begin{equation}\label{halpa}
f(\alpha)=\sum_{\mathbf{r}\leq q}e_{q}\big(aT(\mathbf{r})^{k}\big)K_{\mathbf{r}}(\beta).
\end{equation} 
For each $\mathbf{r}\in\mathbb{N}^{3}$ write $\mathbf{r}=(\mathbf{r}_{1},r_{3}).$ Then by Abel's summation formula we obtain \begin{align*}
K_{\mathbf{r}}(\beta)=&
B_{r_{3}}(P)\sum_{\mathbf{x}_{1}}e\big(F_{\beta}(\mathbf{x}_{1}, P)\big)-\int_{0}^{ P}\sum_{\mathbf{x}_{1}}\frac{\partial }{\partial z}e\big(F_{\beta}(\mathbf{x}_{1},z)\big)B_{r_{3}}(z)dz,
\end{align*}
where $\mathbf{x}_{1}$ runs over pairs $\mathbf{x}_{1}\in [1,P]^{2}$ with $\mathbf{x}_{1}\equiv \mathbf{r}_{1}\mmod{q}$. Consequently, using the equation $B_{r_{3}}(x)=x/q+O(1)$ and integration by parts one gets
$$K_{\mathbf{r}}(\beta)= q^{-1}\int_{0}^{P}\sum_{\mathbf{x}_{1}}e\big(F_{\beta}(\mathbf{x}_{1},z)\big)dz+O\big(q^{-2}P^{2}(1+n\lvert\beta\rvert)\big).$$ We repeat the exact same procedure for the first two variables to obtain
$$K_{\mathbf{r}}(\beta)=q^{-3}v(\beta)+O\big(q^{-2}P^{2}(1+n\lvert\beta\rvert)\big),$$ whence the combination of the above expression with (\ref{halpa}) yields the result claimed above.
\end{proof}
In order to make further progress, we provide an upper bound for $v(\beta)$ in the following lemma. This lemma will be used throughout the major arc analysis.
\begin{lem}\label{cota}
Let $\beta\in\mathbb{R}.$ One has that
$$v(\beta)\ll \frac{P^{3}}{(1+n\lvert\beta\rvert)^{1/k}}.$$
\end{lem}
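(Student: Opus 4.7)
The plan is to reduce the triple integral to a one-dimensional oscillatory integral and then bound it via a standard first derivative estimate. First I would substitute $y_i=x_i^3$ in each coordinate to obtain
$$v(\beta)=\frac{1}{27}\int_{[0,P^3]^3} e\bigl(\beta(y_1+y_2+y_3)^k\bigr)(y_1y_2y_3)^{-2/3}\,dy_1\,dy_2\,dy_3.$$
Parametrising the level sets by $s=y_1+y_2+y_3$, one finds that
$$v(\beta)=\int_0^{3P^3} e(\beta s^k)\mathcal{K}(s)\,ds,\qquad \mathcal{K}(s)=\frac{1}{27}\int_{\substack{y_1+y_2+y_3=s\\ 0\leq y_i\leq P^3}}(y_1y_2y_3)^{-2/3}\,d\sigma(y).$$
The scaling $y_i=sz_i$ shows that $\mathcal{K}(s)=C_0$ is an absolutely convergent constant on $s\in[0,P^3]$ (the singularities are of order $z^{-2/3}$ and hence integrable on the unit simplex), and that $\mathcal{K}(s)$ decreases monotonically to $0$ on $[P^3,3P^3]$.

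Next, the substitution $t=s^k$ together with the identity $P^{3k}=n$ converts the integral into
$$v(\beta)=\frac{1}{k}\int_0^{3^k n} e(\beta t)\,\mathcal{K}(t^{1/k})\,t^{1/k-1}\,dt.$$
The trivial bound $|v(\beta)|\ll P^3$ follows by integrating $t^{1/k-1}$ directly. To treat large $|\beta|$, I would split this integral at $t=n$. On $[0,n]$ the weight $\mathcal{K}(t^{1/k})$ is the constant $C_0$, and the change of variables $u=|\beta|t$ gives
$$\int_0^n e(\beta t)\,t^{1/k-1}\,dt=|\beta|^{-1/k}\int_0^{|\beta|n}e(\pm u)\,u^{1/k-1}\,du,$$
with the inner integral uniformly bounded in its upper limit: the integrand is absolutely integrable at the origin since $1/k>0$, and a single integration by parts on the tail yields boundary and remainder terms of size $O(1)$ since $k\geq 2$ makes both $1/k-1<0$ and $1/k-2<-1$. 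Thus this piece contributes $O(|\beta|^{-1/k})$.

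On $[n,3^k n]$ the integrand $\mathcal{K}(t^{1/k})\,t^{1/k-1}$ is a product of non-negative decreasing functions, hence itself monotone, with boundary value $C_0 n^{1/k-1}$ at $t=n$ and vanishing at $t=3^k n$. The second mean value theorem then yields a bound of $O(n^{1/k-1}/|\beta|)$ for this piece. Under the hypothesis $|\beta|\geq 1/n$ (otherwise the trivial bound already suffices), an elementary manipulation shows $n^{1/k-1}/|\beta|\leq |\beta|^{-1/k}$ (the exponent computation reduces to $|\beta|\geq n^{-1}$), so both pieces contribute $O(|\beta|^{-1/k})$. Combining with the trivial bound yields $|v(\beta)|\ll\min(P^3,|\beta|^{-1/k})$, which in view of $P^3=n^{1/k}$ is exactly the asserted bound $P^3(1+n|\beta|)^{-1/k}$. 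The main technical point is controlling the weight $\mathcal{K}$ on the edge region $s\in[P^3,3P^3]$ where it is no longer constant; this is handled via monotonicity together with the vanishing $\mathcal{K}(3P^3)=0$, which makes the integration by parts contribution acceptable.
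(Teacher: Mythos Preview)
Your argument is correct, and it proceeds along a genuinely different line from the paper's proof. You collapse the triple integral to a single one-dimensional oscillatory integral $\int_0^{3P^3} e(\beta s^k)\mathcal{K}(s)\,ds$, observing via the Dirichlet-integral scaling that the weight $\mathcal{K}(s)$ is an explicit constant on $[0,P^3]$ and monotone non-increasing on $[P^3,3P^3]$; after the substitution $t=s^k$ the piece over $[0,n]$ becomes a standard Fresnel-type integral of size $|\beta|^{-1/k}$, while the tail over $[n,3^kn]$ is handled by the second mean value theorem (or equivalently integration by parts against a monotone weight), giving $O(n^{1/k-1}/|\beta|)\le O(|\beta|^{-1/k})$ when $n|\beta|\ge 1$. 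The paper instead never reduces to one dimension: for $|\beta|>n^{-1}$ it splits the cube $[0,P]^3$ into three regions $\mathcal{T}_1,\mathcal{T}_2,\mathcal{T}_3$ according to how many coordinates exceed $|\beta|^{-1/3k}$, bounds the small-coordinate region by its volume, and for the remaining regions performs a change of variable only in the last coordinate followed by integration by parts, estimating the resulting kernel $B_{\mathbf{y}}(y)$ by direct size considerations in each case. Your route is more structural and arguably cleaner, exploiting the homogeneity of the simplex weight; the paper's decomposition is more hands-on but avoids having to verify any global monotonicity property of $\mathcal{K}$.
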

\begin{proof}
Let $\mathbf{y}\in [0,P]^{2}$ and set $C_{\mathbf{y}}=y_{1}^{3}+y_{2}^{3}$. Define the auxiliary function $B_{\mathbf{y}}(y)=(3k)^{-1}y^{1/k-1}(y^{1/k}-C_{\mathbf{y}})^{-2/3}$. Note that by a change of variables one can rewrite $v(\beta)$ as 
\begin{equation}\label{bri}v(\beta)=\int_{\mathbf{y}\in[0,P]^{2}}\int_{N_{\mathbf{y}}}^{M_{\mathbf{y}}}B_{\mathbf{y}}(y)e(\beta y)dyd\mathbf{y},\end{equation} where
$N_{\mathbf{y}}=C_{\mathbf{y}}^{k}$ and $M_{\mathbf{y}}=(P^{3}+C_{\mathbf{y}})^{k}.$
Observe first that when $\lvert\beta\rvert\leq n^{-1}$ then one trivially gets
$$v(\beta)\ll \int_{\mathbf{y}\in[0,P]^{2}}\int_{N_{\mathbf{y}}}^{M_{\mathbf{y}}}B_{\mathbf{y}}(y)dyd\mathbf{y}\ll P^{3}.$$

For the case $\lvert\beta\rvert>n^{-1}$ we split the integral into
$$v(\beta)\ll I_{1}+I_{2}+I_{3},$$ where $$I_{i}=\int_{\mathbf{x}\in\mathcal{T}_{i}}e\big(\beta T(\mathbf{x})^{k}\big)d\mathbf{x}\ \ \ \ \text{for each $i\in\{1,2,3\}$}$$ and the sets of integration taken are
$$\mathcal{T}_{1}=\Big\{\mathbf{x}\in[0,P]^{3}:\ \ \mathbf{x}\leq \lvert\beta\rvert^{-1/3k}\Big\},\ \ \ \mathcal{T}_{2}=\Big\{\mathbf{x}\in[0,P]^{3}:\ \ x_{2},x_{3}> \lvert\beta\rvert^{-1/3k}\Big\},$$ $$\mathcal{T}_{3}=\Big\{\mathbf{x}\in[0,P]^{3}:\ \ x_{3}> \lvert\beta\rvert^{-1/3k},\ \ \ \ \ \ x_{1},x_{2}\leq \lvert\beta\rvert^{-1/3k}\Big\}.$$Note that for the first integral one has $I_{1}\leq \lvert \mathcal{T}_{1}\rvert\ll \lvert\beta\rvert^{-1/k}.$ For the other two it is convenient to define the parameter $T_{\beta}=(\lvert\beta\rvert^{-1/k}+C_{\mathbf{y}})^{k}$ and consider the set $\mathcal{M}_{2}=[0,P]\times[\lvert\beta\rvert^{-1/3k},P]$. Then, by applying integration by parts we find that
$$I_{2}=\int_{\mathbf{y}\in\mathcal{M}_{2}}\int_{T_{\beta}}^{M_{\mathbf{y}}}B_{\mathbf{y}}(y)e(\beta y)d\mathbf{y}dy\ll \lvert\beta\rvert^{-1}\int_{\mathbf{y}\in\mathcal{M}_{2}}B_{\mathbf{y}}(T_{\beta})d\mathbf{y},$$ where we used the fact that the function $B_{\mathbf{y}}(y)$ is decreasing. Observe that whenever $\mathbf{y}\in \mathcal{M}_{2}$ then one has $\lvert\beta\rvert^{-1/k}\leq C_{\mathbf{y}}$, which delivers the estimate
$$I_{2}\ll \lvert\beta\rvert^{-1+2/3k}\int_{\mathbf{y}\in\mathcal{M}_{2}}C_{\mathbf{y}}^{1-k}d\mathbf{y}\ll \lvert\beta\rvert^{-1+2/3k}\int_{\lvert\beta\rvert^{-1/3k}\leq x}x^{4-3k}dx\ll \lvert\beta\rvert^{-1/k}.$$ Likewise, we introduce the set $\mathcal{M}_{3}=[0,\lvert\beta\rvert^{-1/3k}]^{2}$ to handle $I_{3}$. Using the same argument we get that
$$I_{3}\ll \lvert\beta\rvert^{-1}\int_{\mathbf{y}\in\mathcal{M}_{3}}B_{\mathbf{y}}(T_{\beta})d\mathbf{y},$$
and applying the fact that $C_{\mathbf{y}}\ll \lvert\beta\rvert^{-1/k}$ whenever $\mathbf{y}\in\mathcal{M}_{3}$ to bound $B_{\mathbf{y}}(T_{\beta})$ then we obtain
$$I_{3}\ll  \lvert\beta\rvert^{-1}\int_{\mathbf{y}\in\mathcal{M}_{3}}\lvert\beta\rvert^{1-1/3k}d\mathbf{y}\ll \lvert\beta\rvert^{-1/k}.$$The combination of the bounds for $I_{1},$ $I_{2}$ and $I_{3}$ yields the result of the lemma.
\end{proof}
\section{The asymptotic formula for $R(n)$.}\label{sec7}
We compute the size of the singular integral and use the work and the bounds obtained in the previous sections to obtain an asymptotic formula for $R(n)$. Whenever $s\geq k+1$ define the aforementioned singular integral by $$J(n)=\displaystyle\int_{-\infty}^{\infty}v(\beta)^{s}e(-\beta n) d\beta.$$ Consider the set $$\mathcal{S}=\Big\{(\mathbf{y}_{1},y_{1},\dots,\mathbf{y}_{s},y_{s})\in\mathbb{R}^{3s}:\ \ \mathbf{y}_{i}\in [0,P]^{2},\ \ \ N_{\mathbf{y}_{i}}\leq y_{i}\leq M_{\mathbf{y}_{i}}\Big\}.$$ Then, recalling (\ref{bri}) and using a change of variables it follows that
\begin{align*}J(n)&
=\displaystyle\lim_{\lambda\to\infty}\int_{-\lambda}^{\lambda}\int_{\mathbf{Y}\in\mathcal{S}}\prod_{i=1}^{s}B_{\mathbf{y}_{i}}(y_{i})e\Big(\beta\Big(\sum_{i=1}^{s}y_{i}-n\Big)\Big)d\mathbf{Y}d\beta
\\
&=\lim_{\lambda\to\infty}\int_{0}^{3^{k}sn}\phi(v)\frac{\sin\big(2\pi\lambda(v-n)\big)}{\pi(v-n)}dv,
\end{align*}
where we have taken \begin{equation*}\phi(v)=\int_{\mathbf{Y}\in\mathcal{S}'}B_{\mathbf{y}_{s}}(\gamma_{v})\prod_{i=1}^{s-1}B_{\mathbf{y}_{i}}(y_{i})\ d\mathbf{Y},\ \ \ \ \ \ \ \ \ \ \text{with}\ \ \gamma_{v}=v-\displaystyle\sum_{i=1}^{s-1}y_{i}\end{equation*}  and $\mathcal{S}'\subset \mathbb{N}^{3s-1}$ is the set determined by the underlying inequalities. Observe that $\phi(v)$ is a function of bounded variation, whence by Fourier's Integral Theorem it follows that $J(n)=\phi(n)$. To obtain a precise formula for $J(n)$ it is convenient to introduce the subset $\mathcal{Y}\subset [0,n]^{s-1}$ defined by the constraint $0\leq \gamma_{n}\leq n$. Then, by several subsequent changes of variables and the formula of the Euler's Beta function one has that
\begin{align}\label{ec7.3} J(n)&
=3^{-3s}k^{-s}\Gamma\big(1/3\big)^{3s}\int_{\mathcal{Y}}\gamma_{n}^{1/k-1}\Big(\prod_{i=1}^{s-1}y_{i}^{1/k-1}\Big)\ dy_{1}\cdots dy_{s-1}\nonumber
\\
&=\Gamma\big(4/3\big)^{3s}\Gamma\big(1+1/k\big)^{s}\Gamma\big(s/k\big)^{-1}n^{s/k-1}.
\end{align}
We are now equipped to compute an asymptotic formula for the major arc contribution, which we define by
$$R_{\grM}(n)=\int_{\grM}f(\alpha)^{s}e(-\alpha n)d\alpha.$$ 
\begin{prop}\label{prop7}
Let $s\geq \max(5,k+2)$. Then,
$$R_{\grM}(n)=\Gamma\big(4/3\big)^{3s}\Gamma\big(1+1/k\big)^{s}\Gamma\big(s/k\big)^{-1}\mathfrak{S}(n)n^{s/k-1}+O(n^{s/k-1-\delta}).$$
\end{prop}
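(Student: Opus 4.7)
The approach is a standard Hardy--Littlewood major arc analysis. First, on each arc $\grM(a,q)$ I would replace $f(\alpha)^s$ by $V(\alpha,q,a)^s$. Both satisfy $|f|,|V|\ll P^{3}$ (the latter via the trivial bound $|S(q,a)|\leq q^{3}$ together with $v(\beta)\ll P^{3}$), so writing $f^{s}-V^{s}$ as a telescoping sum and applying Lemma \ref{lem6} yields
$$|f(\alpha)^{s}-V(\alpha,q,a)^{s}|\ll P^{3(s-1)}\cdot P^{2}q\bigl(1+n|\beta|\bigr)\ll P^{3s-1+2\xi}$$
on $\grM$, where I used $q\leq P^{\xi}$ and $n|\beta|\leq P^{\xi}$. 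Since the total measure of the major arcs is $O(P^{2\xi}/n)$ and $n=P^{3k}$, the resulting contribution to $R_{\grM}(n)$ after integration is $O(P^{3s-3k-1+4\xi})$, which is $O(P^{3s-3k-\delta})$ provided $\xi$ is chosen small enough that $4\xi<1$.

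Second, I would expand the remaining main term via (\ref{vbet}) and (\ref{kkk}) as
$$\int_{\grM}V(\alpha,q,a)^{s}e(-\alpha n)\,d\alpha=\sum_{q\leq P^{\xi}}S_{n}(q)\int_{-P^{\xi}/(qn)}^{P^{\xi}/(qn)}v(\beta)^{s}e(-\beta n)\,d\beta,$$
and then extend the inner integral to the whole real line, thereby replacing it by $J(n)$. The tail contribution is controlled by Lemma \ref{cota}: since $s\geq k+2$ gives $s/k>1$, one obtains
$$\int_{|\beta|>P^{\xi}/(qn)}|v(\beta)|^{s}\,d\beta\ll P^{3s-3k}(q/P^{\xi})^{s/k-1}.$$
Summing this against $|S_{n}(q)|$ and invoking the first estimate in (\ref{Sas}) via partial summation gives a total tail error of $O(P^{3s-3k-\xi/k+\varepsilon})$.

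Finally, I would complete the singular series by replacing $\sum_{q\leq P^{\xi}}S_{n}(q)$ by $\mathfrak{S}(n)$. The associated error is $|J(n)|\cdot\sum_{q>P^{\xi}}|S_{n}(q)|\ll P^{3s-3k}\cdot P^{\varepsilon-\xi/k}$ by the second estimate in (\ref{Sas}) together with the bound $J(n)\ll P^{3s-3k}$ that follows from the explicit evaluation (\ref{ec7.3}). Collecting the three error terms and using (\ref{ec7.3}) yields
$$R_{\grM}(n)=\mathfrak{S}(n)J(n)+O(P^{3s-3k-\delta'})=\Gamma(4/3)^{3s}\Gamma(1+1/k)^{s}\Gamma(s/k)^{-1}\mathfrak{S}(n)n^{s/k-1}+O(n^{s/k-1-\delta''}),$$
where the last equality uses $P^{3k}=n$. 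No step presents a real difficulty, since all preparatory ingredients---the approximation lemma, the pointwise bound on $v(\beta)$, the singular-series estimates, and the explicit form of $J(n)$---are already in hand. The only technical care required is the bookkeeping of error exponents and the choice of $\xi\in(0,1/4)$ so that every error term simultaneously saves a positive power of $P$.
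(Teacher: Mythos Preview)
Your proposal is correct and follows essentially the same route as the paper: approximate $f$ by $V$ on each arc, extend the truncated integral to $J(n)$ via Lemma~\ref{cota}, complete the singular series via (\ref{Sas}), and insert the evaluation (\ref{ec7.3}).

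The one substantive difference lies in the first step. You bound $|f^{s}-V^{s}|$ by the crude telescoping estimate $|f-V|\cdot sP^{3(s-1)}$, which after integration forces the restriction $\xi<1/4$. The paper instead writes
\[
f(\alpha)^{s}-f^{*}(\alpha)^{s}\ll |f-f^{*}|^{s}+|f-f^{*}|\,|f^{*}|^{s-1}
\]
and then exploits the decay of $|f^{*}|=|q^{-3}S(q,a)|\,|v(\beta)|$ through Lemma~\ref{cota} and the bound $\sum_{q\le P^{\xi}}S_{s-1}^{*}(q)\ll P^{\varepsilon}$ from Lemma~\ref{cor4}; this yields the error terms in (\ref{ugu}) and permits the wider range $\xi<s/(s+2)$ fixed before (\ref{5us}). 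For Proposition~\ref{prop7} and Theorem~\ref{thm9.1} your more restrictive choice of $\xi$ is harmless, since the minor-arc saving in the first part of Proposition~\ref{prop222} holds for any fixed $\xi>0$. The paper's sharper handling only becomes essential in the Appendix, where $\xi$ must be chosen close to~$1$.
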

\begin{proof}
For the sake of simplicity we consider the auxiliary function $f^{*}(\alpha)$ for $\alpha\in [0,1)$ by putting $f^{*}(\alpha)=V(\alpha,q,a)$ when $\alpha\in\grM(a,q)\subset\grM$ and $f^{*}(\alpha)=0$ for $\alpha\in\grm.$ We remind the reader that $V(\alpha,q,a)$ was defined in (\ref{vbet}). Recalling Lemma \ref{lem6} then whenever $\alpha\in \grM (a,q)$ one has that
$$f(\alpha)^{s}-f^{*}(\alpha)^{s}\ll P^{2s}q^{s}(1+n\lvert \beta\rvert)^{s}+P^{2}q(1+n\lvert \beta\rvert)\lvert f^{*}(\alpha)\rvert^{s-1}.$$
Integrating over the major arcs, which were defined in (\ref{5us}), we find that
\begin{align}\label{ugu}\int_{\grM}\lvert f(\alpha)^{s}-f^{*}(\alpha)^{s}\rvert d\alpha&
\ll P^{2s+\xi(s+2)}n^{-1}+ P^{3s-3k-1+\xi}\sum_{q\leq P^{\xi}}S_{s-1}^{*}(q),
\end{align}
and hence Lemma \ref{cor4} and the assumption on $\xi$ stated before (\ref{5us}) implies that the above integral is $O(P^{3s-3k-\delta})$. Observe that by the same lemma and Lemma \ref{cota} respectively we have $$\displaystyle\sum_{ P^{\xi}< q}\lvert S_{n}(q)\rvert =O(P^{-\delta}) \ \ \ \ \ \ \ \text{and}\ \ \ \ \ \ \ \displaystyle\int_{\lvert \beta\rvert> \frac{P^{\xi}}{qn}}\lvert v(\beta)\rvert^{s}d\beta=O\big(P^{3s-3k}q^{\delta}P^{-\delta\xi}\big)$$ for $q\leq P^{\xi}$ and some $\delta>0$. Combining these observations with the aforementioned lemmata and equations (\ref{Sas}) and (\ref{ugu}) we obtain $R_{\grM}(n)=\frak{S}(n)J(n)+O(n^{s/k-1-\delta})$, and hence (\ref{ec7.3}) delivers the result.
\end{proof}
Theorem \ref{thm9.1} then follows applying Propositions \ref{prop222}, \ref{prop66} and \ref{prop7}.
\section{Asymptotic formula over the smooth numbers.}\label{Sec99}
In this section we investigate the asymptotic formula for the representation function when two of the variables of each triple lie on the smooth numbers. The strategy for bounding the integral over the minor arcs of $g(\alpha)$ combines arguments of Section \ref{sec2} with major arc techniques. 
We define the major arcs $\grN$ to be the union of 
\begin{equation}\label{kioto}\grN(a,q)=\Big\{ \alpha\in [0,1): \big\lvert \alpha-a/q\big\rvert \leq q^{-1}(\log P)^{\kappa}P^{-3k}\Big\}\end{equation} with $0\leq a\leq q\leq (\log P)^{\kappa}$ and $(a,q)=1$, where $\kappa=1/5$. We take the minor arcs $\grn=[0,1)\setminus \grN$. Similarly, when $1\leq X\leq L$ with $L=P^{1/3k},$ define $\grW(X)$ as the union of the arcs
$$\grW(\mathbf{a},q)=\Big\{\boldsymbol{\alpha}\in [0,1)^{k}: \lvert \alpha_{j}-a_{j}/q\rvert\leq q^{-1}XP^{-3j}\ \ \ \ (1\leq j\leq k)\Big\}$$ with $0\leq \mathbf{a}\leq q\leq X$ and $(q,a_{1},\dots,a_{k})=1$. For the sake of simplicity we write $$\grW=\grW(L),\ \ \ \ \ \ \grP=\grW\big((\log P)^{\kappa}\big),$$ and we take the minor arcs $\grw=[0,1)^{k}\setminus\grW$ and $\grp=[0,1)^{k}\setminus\grP.$
First we prove a lemma which permits us to have a saving over the trivial bound for some Weyl sum on $\grp$. For such purposes we consider the exponential sum associated to the Vinogradov's system
$$f_{k}(\bfalpha;X)=\sum_{1\leq x\leq X}e\big(\alpha_{1}x+\ldots +\alpha_{k} x^{k}\big).$$

\begin{lem}\label{pity}
Let $X$ be any real positive number sufficiently big in terms of $k$, let $\mu$ be a real number such that $\mu^{-1}>4k(k-1)$ and let $\gamma$ denote a real number with $X^{-\mu}\leq \gamma\leq 1.$ Then whenever $\lvert f_{k}(\bfalpha;X)\rvert\geq \gamma X$, there exist an integer $q\in\mathbb{N}$ and a tuple $\mathbf{a}=(a_{1},\dots, a_{k})\in\mathbb{N}^{k}$ with $(q,a_{1},\dots, a_{k})=1$ and $1\leq q\ll\gamma^{-k-\varepsilon}$ and such that
$$ \lvert q\alpha_{j}-a_{j}\rvert\ll\gamma^{-k-\varepsilon}X^{-j}\ \ \ \ \ (1\leq j\leq k).$$
\end{lem}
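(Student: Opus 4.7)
The plan is to establish the contrapositive: assuming $\boldsymbol{\alpha}$ admits no common-denominator rational approximation of the stated quality, one derives $|f_{k}(\boldsymbol{\alpha}; X)| < \gamma X$, contradicting the hypothesis. This is essentially a Vinogradov--Weyl dichotomy for the Vinogradov exponential sum, and the natural tool is the (now unconditional) optimal mean value theorem.

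First I would invoke Vinogradov's mean value theorem in its now-optimal form, proved unconditionally by Wooley via efficient congruencing and by Bourgain--Demeter--Guth via $\ell^{2}$-decoupling, namely
\begin{equation*}
J_{s,k}(X) := \int_{[0,1)^{k}} |f_{k}(\boldsymbol{\beta}; X)|^{2s}\, d\boldsymbol{\beta} \ll X^{2s - k(k+1)/2 + \varepsilon}, \qquad s \geq \tfrac{1}{2}k(k+1).
\end{equation*}

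Second, I would perform a continuity-plus-volume large-values count. The derivative bound $|\partial_{\alpha_{j}} f_{k}(\boldsymbol{\beta}; X)| \ll X^{j+1}$ implies that the hypothesis $|f_{k}(\boldsymbol{\alpha}; X)| \geq \gamma X$ propagates (up to a factor $1/2$) to a box $B(\boldsymbol{\alpha})$ of $j$-th sidelength $\asymp \gamma X^{-j}$ and total volume $\asymp \gamma^{k} X^{-k(k+1)/2}$. Integrating the resulting $L^{2s}$ lower bound over a maximal $\asymp$-spaced family of such boxes and comparing against the Vinogradov upper bound yields
\begin{equation*}
N \ll \gamma^{-2s - k + \varepsilon},
\end{equation*}
where $N$ counts the separated points $\boldsymbol{\alpha}$ at which $|f_{k}(\boldsymbol{\alpha}; X)| \geq \gamma X$.

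Third, I would combine this count with a Dirichlet simultaneous approximation to extract the required common denominator. Given $\boldsymbol{\alpha}$, Dirichlet with parameter $Q = \lceil \gamma^{-k-\varepsilon}\rceil$ produces $q \leq Q$ and integers $a_{1},\dots,a_{k}$ with $(q,a_{1},\dots,a_{k}) = 1$ and a baseline control on $|q\alpha_{j} - a_{j}|$; the large-values bound from the previous step forces the sharper estimate $|q\alpha_{j} - a_{j}| \ll \gamma^{-k-\varepsilon} X^{-j}$, since otherwise the boxes $B(\boldsymbol{\alpha})$ centered at the large-values points would cover more measure than the mean value theorem allows. The numerical hypothesis $\mu^{-1} > 4k(k-1)$ enters precisely here: it guarantees that $\gamma^{-k-\varepsilon}$ remains comfortably smaller than the thresholds implicit in the Vinogradov exponent and the Dirichlet parameter, so that the balance of estimates is consistent.

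The main obstacle I expect is this third step. Translating the volumetric large-values count into the specific denominator bound $q \ll \gamma^{-k-\varepsilon}$ with coordinate-wise quality $\gamma^{-k-\varepsilon} X^{-j}$ requires a delicate choice of the Dirichlet parameter and careful tracking of $\varepsilon$-losses from the Vinogradov bound. Moreover, the precise origin of the constant $4k(k-1)$ in the hypothesis on $\mu$ reflects the combined cost of the mean value exponent $k(k+1)/2$, the $k$ approximation qualities, and the denominator range; verifying that these conspire to deliver exactly the exponent $-k-\varepsilon$ rather than a weaker one is where the bookkeeping is most delicate.
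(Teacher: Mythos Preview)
Your third step contains a genuine gap. The large-values count $N\ll \gamma^{-2s-k+\varepsilon}$ bounds the \emph{number} of separated boxes on which $|f_k(\bfalpha;X)|$ is large, but it says nothing about \emph{where} those boxes sit; in particular it gives no reason why a large-value point must lie close to a rational with small denominator. Dirichlet's theorem alone produces some $q$ with a generic quality $|q\alpha_j-a_j|\le Q^{-1/k}$ (or the analogous simultaneous bound), and nothing in the mean-value estimate upgrades this to $\ll \gamma^{-k-\varepsilon}X^{-j}$. The sentence ``otherwise the boxes $B(\bfalpha)$ \ldots would cover more measure than the mean value theorem allows'' conflates the measure of the large-value set with the arithmetic location of a single point $\bfalpha$; these are unrelated without further input. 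What is missing is precisely the step that converts ``large sum'' into ``near a rational'': a Weyl-type inequality.

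The paper's argument avoids this entirely by a short two-stage bootstrap. First it quotes Wooley's Weyl inequality (Theorem~1.6 of \cite{Woo6}), which already supplies a rational approximation $q\le X^{1/k}$, $|q\alpha_j-a_j|\le X^{1/k-j}$; this is the step that does the actual locating and is where the hypothesis $\mu^{-1}>4k(k-1)$ is used (it is the hypothesis of that theorem). Second, having placed $\bfalpha$ on a major arc, it invokes the explicit approximation $|f_k(\bfalpha;X)|\ll q^\varepsilon X\,T(\bfalpha;q,\mathbf a)^{-1/k}+T(\bfalpha;q,\mathbf a)$ from Vaughan's Theorems~7.1--7.3, observes that the second term is dominated since $T\ll X^{1/k}$, and reads off $T\ll \gamma^{-k-\varepsilon}$ from $\gamma X\le |f_k|$. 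The exponent $-k-\varepsilon$ thus comes not from balancing volumes but simply from inverting the $T^{-1/k}$ in the major-arc formula.
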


\begin{proof}
Suppose that  $\lvert f_{k}(\bfalpha;X)\rvert\geq \gamma X.$ Then, applying Wooley \cite[Theorem 1.6]{Woo6} we obtain that there exist $q\in\mathbb{N}$ and $\mathbf{a}\in\mathbb{N}^{k}$ with $(q,a_{1},\dots, a_{k})=1$ such that $1\leq q\leq X^{1/k}$ and
\begin{equation}\label{piiuuoo}\lvert q\alpha_{j}-a_{j}\rvert\leq X^{1/k-j}\ \ \ \ \ \ \ (1\leq j\leq k).\end{equation}
In order to make further progress it is convenient to define the auxiliary function 
$$T(\bfalpha;q,\mathbf{a})=q+\lvert q\alpha_{1}-a_{1}\rvert X+\dots+\lvert q\alpha_{k}-a_{k}\rvert X^{k}.$$
By Theorems 7.1, 7.2 and 7.3 of Vaughan \cite{Vau} one has that 
\begin{equation*}\lvert f_{k}(\bfalpha;X)\rvert\ll q^{\varepsilon}X T(\bfalpha;q,\mathbf{a})^{-1/k}+T(\bfalpha;q,\mathbf{a}).
\end{equation*}
Observe that equation (\ref{piiuuoo}) yields $T(\bfalpha;q,\mathbf{a})\ll X^{1/k}$, which implies that $q^{\varepsilon}X T(\bfalpha;q,\mathbf{a})^{-1/k}$ is the term dominating in the previous estimate. Therefore, by the preceding discussion and the fact that $q\leq T(\bfalpha;q,\mathbf{a})$ we obtain $$\gamma X\leq \lvert f_{k}(\bfalpha;X)\rvert\ll XT(\bfalpha;q,\mathbf{a})^{-1/k+\varepsilon},$$ which gives $T(\bfalpha;q,\mathbf{a})\ll\gamma^{-k-\varepsilon}$ and delivers the lemma.
\end{proof}
We are now equipped to prove the minor arc estimate. Recalling (\ref{Rseta}) and the definition of $\mathcal{C}(P)$ made after (\ref{rap}), observe that by orthogonality one has that
\begin{equation*}\label{Reta}R_{\eta}(n)=\int_{0}^{1}g(\alpha)^{s}e(-\alpha n)d\alpha, \ \ \ \ \ \ \ \ \ \ \ \text{where}\ \ g(\alpha)=\sum_{\mathbf{x}\in \mathcal{C}(P)}e\big(\alpha T(\mathbf{x})^{k}\big).\end{equation*}
In what follows we show that the minor arc contribution is smaller than the expected main term by combining the previous lemma and other minor arc estimates with some major arc ideas.
\begin{prop}\label{prop123} Whenever $s$ is any positive integer with $s\geq H(k)$ one has
$$\int_{\grn}\lvert g(\alpha)\rvert^{s} {\rm     d}\alpha\ll P^{3s-3k}(\log P)^{-\delta}.$$
\end{prop}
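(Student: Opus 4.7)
The plan is to mirror the Section \ref{sec2} argument for $f(\alpha)$ but with the smoothness restriction of $g$ and the wider minor arcs $\grn$, replacing the Weyl-type $P^{-\delta}$ saving used there by a logarithmic-scale saving extracted from Lemma \ref{pity}. It suffices to treat $s=H(k)$, since for $s>H(k)$ the remaining factors of $g(\alpha)$ may be bounded trivially by $P^{3(s-H(k))}$. Write $s=2t$ with $t=H(k)/2$.

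Since $g(\alpha)=\sum_{\mathbf{x}\in\mathcal{A}(P,P^{\eta})^{2}}f_{\mathbf{x}}(\alpha)$ with $|\mathcal{A}(P,P^{\eta})|\ll P$ and $f_\mathbf{x}$ as in Section \ref{sec2}, H\"older's inequality gives
\[
|g(\alpha)|^{2t}\ll P^{4t-2}\sum_{\mathbf{x}\in \mathcal{A}^2}|f_{\mathbf{x}}(\alpha)|^{2t}\leq P^{4t-2}\sum_{\mathbf{x}\in[1,P]^{2}}|f_{\mathbf{x}}(\alpha)|^{2t}.
\]
Proceeding as in the derivation of (\ref{puu}) with $\grn$ in place of $\grm$, and observing that for each fixed $\alpha_k$ and $\mathbf{x}$ the exponential sum $G_{\mathbf{x}}(\bfalpha)$ from (\ref{GF}) agrees, up to a unit-modulus phase depending only on $\mathbf{x}$ and $\alpha_k$, with $F(\bfalpha)$ after an $(\alpha_k,\mathbf{x})$-dependent translation of $(\alpha_1,\ldots,\alpha_{k-1})$ that is measure-preserving on the torus $[0,1)^{k-1}$, one arrives at
\[
\int_{\grn}|g(\alpha)|^{2t}\,d\alpha\ll P^{4t+3k(k-1)/2}\int_{\frak{B}''}|F(\bfalpha)|^{2t}\,d\bfalpha,\qquad \frak{B}'':=\grn\times[0,1)^{k-1}.
\]

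It therefore suffices to establish $\int_{\frak{B}''}|F(\bfalpha)|^{2t}\,d\bfalpha\ll P^{2t-3k(k+1)/2}(\log P)^{-\delta}$. The strategy is to combine a pointwise sup bound on $\frak{B}''$ with the full-torus Vinogradov mean value estimate for $F$. For the sup, view $F(\bfalpha)=f_{3k}(\bfbeta;P)$ with $\beta_{3m}=\alpha_m$ for $1\leq m\leq k$ and $\beta_j=0$ otherwise, and apply Lemma \ref{pity} with $\gamma=(\log P)^{-c}$. If $|F(\bfalpha)|\geq \gamma P$, the lemma produces $q$ and $a_{3k}$ (which may be reduced to a coprime pair $(q',a')$ with $q'\leq q$ without increasing the approximation error) satisfying $q'\leq (\log P)^{c(3k+\varepsilon)}$ and $|\alpha_k-a'/q'|\leq (\log P)^{c(3k+\varepsilon)}(q')^{-1}P^{-3k}$. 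Choosing $c>0$ small enough that $c(3k+\varepsilon)<\kappa=\tfrac15$ forces $\alpha_k\in\grN$, contradicting $\alpha_k\in\grn$; hence $\sup_{\frak{B}''}|F(\bfalpha)|\ll P(\log P)^{-c}$. Interpolating this with $\int_{[0,1)^{k}}|F(\bfalpha)|^{2t_0}\,d\bfalpha\ll P^{2t_0-3k(k+1)/2}$, valid for some $2t_0<2t$ in the Vinogradov-efficient range (cf.\ (\ref{BF}) on the full torus), yields the desired bound, and chaining it with the previous display produces $\int_{\grn}|g|^{2t}\,d\alpha\ll P^{3s-3k}(\log P)^{-\delta}$.

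The main obstacle is the sup step. One must verify that Lemma \ref{pity} still applies despite the vanishing constraints $\beta_j=0$ for $3\nmid j$ (here the corresponding integer approximants $a_j$ must themselves vanish for $P$ large) and ensure that the coprime reduction of $(q,a_{3k})$ leaves both the denominator and the approximation error compatible with the $\grN$ definition from (\ref{kioto}). The choice $\kappa=\tfrac15$ leaves enough slack to fix $c$ depending only on $k$.
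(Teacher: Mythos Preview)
Your reduction to bounding $\int_{\frak{B}''}|F(\bfalpha)|^{2t}\,d\bfalpha$ with $\frak{B}''=\grn\times[0,1)^{k-1}$, and your derivation of the sup bound $\sup_{\frak{B}''}|F|\ll P(\log P)^{-c}$ from Lemma~\ref{pity} applied to $F$ viewed as $f_{3k}(\bfbeta;P)$, are both correct and match the paper's use of that lemma. The gap is in the interpolation step. You assert an $\varepsilon$-free mean value
\[
\int_{[0,1)^{k}}|F(\bfalpha)|^{2t_{0}}\,d\bfalpha\ll P^{2t_{0}-3k(k+1)/2}
\]
for some $2t_{0}<2t=H(k)$, citing ``(\ref{BF}) on the full torus''. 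But (\ref{BF}) is a bound over $\frak{B}$, and its $-\delta$ comes entirely from the Weyl-type sup on $\frak{B}$; the underlying input from Wooley~\cite[Theorem~14.5]{Woo4} carries a factor $P^{\varepsilon}$. Over the full torus the only bound available from that source is $\ll P^{2t_{0}-3k(k+1)/2+\varepsilon}$, and this $P^{\varepsilon}$ annihilates any $(\log P)^{-c(2t-2t_{0})}$ gain from your sup. Removing the $\varepsilon$ would itself require a circle-method argument for the system $\{x^{3},x^{6},\ldots,x^{3k}\}$, which is precisely the work you are trying to avoid.

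The paper resolves this by splitting $\frak{B}''\subset\grw\cup(\grW\setminus\grP)$. On $\grw$ one has a genuine power-saving sup (via Vaughan~\cite[Theorem~5.2]{Vau} and Bourgain's Vinogradov mean value), which absorbs the $P^{\varepsilon}$ from \cite[Theorem~14.5]{Woo4}. On $\grW\setminus\grP$ the paper performs an explicit major-arc computation: approximating $F$ by $V(\bfalpha;q,\mathbf{a})=q^{-1}S(q,\mathbf{a})I(\bfalpha-\mathbf{a}/q)$ with error $O(L)$, and then bounding $V$ via Vaughan~\cite[Theorems~7.1, 7.3]{Vau}, one obtains
\[
\int_{\grW}|F(\bfalpha)|^{2t-2}\,d\bfalpha\ll P^{2t-2-3k(k+1)/2}
\]
with \emph{no} $\varepsilon$. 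Only then does multiplying by the $(\log P)^{-\delta}$ sup from Lemma~\ref{pity} on $\grp\supset\grW\setminus\grP$ give the required logarithmic saving. This $\varepsilon$-free moment over $\grW$ is the essential missing ingredient in your argument; without it, the interpolation you propose does not close.
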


\begin{proof}
We write $H(k)=2t$ for some positive integer $t$. Recalling (\ref{GF}) and using the same argument as in (\ref{puu}) it follows that
\begin{align}\label{tiio}
\int_{\grn}\lvert g(\alpha)\rvert^{2t}d\alpha\ll P^{4t+3k(k-1)/2}\int_{\grn}\int_{[0,1)^{k-1}}\lvert F(\bfalpha)\lvert^{2t}d\bfalpha.
\end{align}
Observe that we can estimate the above integral by\begin{equation}\label{Falp}\int_{\grn}\int_{[0,1)^{k-1}}\lvert F(\bfalpha)\lvert^{2t}d\bfalpha\ll \int_{\grw}\lvert F(\bfalpha)\lvert^{2t}d\bfalpha+\int_{\grW\setminus \grP}\lvert F(\bfalpha)\lvert^{2t}d\bfalpha.\end{equation}
Note as well that combining Vaughan \cite[Theorem 5.2]{Vau} with Bourgain's result on Vinogradov's mean value theorem \cite[Theorem 1.1]{Bou} one has that
\begin{equation*}\sup_{\bfalpha\in\grw}\lvert F(\bfalpha)\rvert\ll P^{1-\delta}. \end{equation*}We then bound the first term on the right side of (\ref{Falp}) via an application of the above pointwise bound and Wooley \cite[Theorem 14.5]{Woo4} in the same way as in (\ref{BF}) to obtain $$\int_{\grw}\lvert F(\bfalpha)\lvert^{2t}d\bfalpha\ll P^{2t-3k(k+1)/2-\delta}.$$

In order to estimate the second one we provide a major arc analysis. For such purpose we consider the auxiliary function $V(\bfalpha;q,\mathbf{a})=q^{-1}S(q,\mathbf{a})I(\bfalpha-\mathbf{a}/q),$ where
$$S(q,\mathbf{a})=\sum_{r=1}^{q}e_{q}\big(a_{1}r^{3}+\ldots +a_{k}r^{3k}\big),\ \ \ \ \ \ I(\bfbeta)=\int_{0}^{P}e(\beta_{1}\gamma^{3}+\ldots+\beta_{k}\gamma^{3k})d\gamma.$$ For the sake of conciseness, we define for $\bfalpha\in [0,1)^{k}$ the function $V(\bfalpha)=V(\bfalpha;q,\mathbf{a})$ when $\bfalpha\in \grW(\mathbf{a},q)\subset \grW$ and $V(\bfalpha)=0$ for $\bfalpha\in\grw.$ Observe that by Vaughan \cite[Theorem 7.2]{Vau} then whenever $\bfalpha\in\grW$ it follows that
$$F(\bfalpha)-V(\bfalpha)\ll L,$$ whence the triangle inequality then yields $$\lvert F(\bfalpha)\rvert^{2t-2}-\lvert V(\bfalpha)\rvert^{2t-2}\ll LP^{2t-3}.$$ Therefore, combining the fact that ${\rm mes}(\grW)\ll L^{k+1}P^{-3k(k+1)/2}$ with the above estimate we get
$$\int_{\grW}\lvert F(\bfalpha)\rvert^{2t-2}d\bfalpha-\int_{\grW}\lvert V(\bfalpha)\rvert^{2t-2}d\bfalpha\ll P^{2t-2-3k(k+1)/2-\delta}.$$
On the other hand, note that Vaughan \cite[Theorems 7.1, 7.3]{Vau} gives
$$V(\bfalpha)\ll q^{\varepsilon}P\Big(q +\lvert q\alpha_{1}-a_{1}\rvert P^{3}+\cdots+\lvert q\alpha_{k}-a_{k}\rvert P^{3k}\Big)^{-1/3k},$$ and consequently, it follows that $$\int_{\grW}\lvert F(\bfalpha)\rvert^{2t-2}d\bfalpha\ll P^{2t-2-3k(k+1)/2}.$$We finally apply Lemma \ref{pity} to $F(\bfalpha)$ to obtain that when $\bfalpha\in \grp$ then one has $F(\bfalpha)<P(\log P)^{-\delta}$ for some $\delta>0$. Therefore, combining this bound with the above major arc estimate we obtain $$\int_{\grW\setminus \grP}\lvert F(\bfalpha)\rvert^{2t}d\bfalpha\ll P^{2t-3k(k+1)/2}(\log P)^{-\delta}.$$ The preceding discussion and equations (\ref{tiio}) and (\ref{Falp}) imply the proposition.
\end{proof}Next we introduce some properties of the smooth numbers concerning their density and distribution over arithmetic progressions which will be used throughout the argument for the approximation of $g(\alpha)$ over the major arcs. For such purposes, it is convenient to define
$$A_{r}(m)=\sum_{\substack{x\in \mathcal{A}(m,P^{\eta})\\ x\equiv r\pmod q}}1.$$
\begin{lem}\label{lem8.1}
Let $q,m\in\mathbb{N}$ with $q\leq P^{\eta}$ and $P^{\eta}< m\leq P.$ Then for each $0\leq r\leq q-1$ one has
\begin{equation*}A_{r}(m)=q^{-1}m\rho\Big(\frac{\log m}{\eta\log P}\Big)+O\Big(\frac{m}{\log m}\Big),\end{equation*} where the function $\rho(x)$ was defined before Theorem \ref{thm9.3}.
\end{lem}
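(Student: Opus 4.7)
The plan is to detect the congruence $x \equiv r \pmod{q}$ via orthogonality of additive characters modulo $q$, isolate the main term from the zero frequency by invoking the classical de Bruijn--Hildebrand asymptotic for $\Psi(m,P^\eta) = \lvert\mathcal{A}(m,P^\eta)\rvert$, and absorb the non-principal frequencies into the error term via an equidistribution estimate for smooth numbers in arithmetic progressions.

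First, I would write by orthogonality
\begin{equation*}
A_r(m) = \frac{1}{q} \sum_{a=0}^{q-1} e_q(-ar) \sum_{x \in \mathcal{A}(m,P^\eta)} e_q(ax).
\end{equation*}
The contribution of $a=0$ equals $q^{-1}\Psi(m,P^\eta)$, and in the range $P^\eta < m \leq P$ the de Bruijn--Hildebrand theorem delivers
\begin{equation*}
\Psi(m,P^\eta) = m\,\rho\left(\frac{\log m}{\eta\log P}\right) + O\left(\frac{m}{\log m}\right),
\end{equation*}
since here $u = \log m/(\eta \log P) \leq 1/\eta$ is bounded and $P^\eta \geq (\log m)^{1+\varepsilon}$ comfortably. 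Dividing by $q$ produces the main term of the lemma within the prescribed error.

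For the remaining frequencies $1 \leq a \leq q-1$, I would reduce to coprime denominators by setting $d=(a,q)$ and writing $a=da'$, $q=dq'$ with $(a',q')=1$ and $q'\geq 2$. A Fouvry--Tenenbaum type equidistribution estimate for smooth integers in arithmetic progressions then supplies a uniform bound of the shape
\begin{equation*}
\sum_{x \in \mathcal{A}(m,P^\eta)} e_{q'}(a'x) \ll m(\log m)^{-1-\delta}
\end{equation*}
for some $\delta>0$, valid for all $2 \leq q' \leq P^\eta$ and $(a',q')=1$. Summing this over the at most $q-1$ non-principal frequencies and dividing by $q$ leaves an acceptable total contribution of $O(m/\log m)$.

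The main obstacle is the uniform exponential-sum bound over smooth numbers in the non-principal case, since $q$ is allowed to grow as large as the smoothness parameter $P^\eta$; the constraint $q \leq P^\eta$, however, is precisely the range where smooth integers equidistribute in residue classes modulo $q$, so the cited technique applies and closes the argument.
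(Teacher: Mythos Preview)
Your argument is correct and ultimately rests on the same fact the paper invokes, but the packaging differs. The paper's proof is a one-line citation: the de~Bruijn-type asymptotic $\Psi(m,P^{\eta})=m\rho\big(\log m/(\eta\log P)\big)+O(m/\log m)$ from Montgomery--Vaughan, Theorem~7.2, combined with the argument underlying Vaughan's Lemma~5.4 in \cite{Vau3}, which shows directly that $y$-smooth integers up to $m$ equidistribute among \emph{all} residue classes modulo $q$ once $q\leq y$, with error $O(m/\log m)$. You instead detect the congruence via additive characters and handle the non-principal frequencies by a Fouvry--Tenenbaum-type exponential-sum bound. This works, but two remarks are in order. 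First, the extra $(\log m)^{-\delta}$ saving you claim is neither standard in that full uniformity $q'\leq P^{\eta}$ nor necessary: the weaker bound $O(m/\log m)$ per frequency already gives $O(m/\log m)$ after summing over $1\leq a\leq q-1$ and dividing by $q$. Second, that weaker exponential-sum bound is precisely equivalent to the arithmetic-progression equidistribution the paper cites, so your character decomposition is a detour rather than a genuinely different input. In short, your route is a legitimate and more explicit repackaging of the same content, at the cost of quoting slightly heavier machinery than the argument actually needs.
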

\begin{proof}
It follows from Montgomery and Vaughan \cite[Theorem 7.2]{Mon} and the argument of the proof of Vaughan \cite[Lemma 5.4]{Vau3}.
\end{proof}
We are now equipped to provide the approximation for $g(\alpha)$. In fact, we prove here a generalized version for future use in one of our forthcoming article. For such purpose, we take constants $0\leq C_{1}<C_{2}$ and $C_{3}>0$. Let $Q>0$, let $m\in\mathbb{N}$ and define the exponential sum
\begin{equation*}g_{Q,m}(\alpha)=\displaystyle\sum_{\substack{\mathbf{x}\in\mathcal{B}}}e\big(\alpha T(m\mathbf{x})^{k}\big),\end{equation*} where $$\mathcal{B}=\Big\{\mathbf{x}\in\mathbb{N}^{3}:\ \ C_{1}Q<x_{1}\leq C_{2}Q,\ \ \  x_{2},x_{3}\in \mathcal{A}(C_{3}Q,Q^{\eta})\Big\}.$$
Despite making the choice $\kappa=1/5$ in the definition (\ref{kioto}), the following lemma contains a result which makes no use of that choice and remains valid for the range $0<\kappa<1$.
\begin{lem}\label{lem888}Let $\alpha\in \grN(a,q)$, where $a\in\mathbb{Z}$, $q\in\mathbb{N}$ with $(a,q)=1$ and $q\leq (\log)^{\kappa}$. Let $m\in\mathbb{N}$ with $(m,q)=1$. Take $Q>0$ with the property that $mQ\asymp P$ and consider $\beta=\alpha-a/q$. Then,
$$g_{Q,m}(\alpha)=V_{Q,m}(\alpha,q,a)+O\big(E(Q)\big),$$
where we take $E(Q)=Q^{3}(\log Q)^{\kappa-1}\log\log Q$ and $$V_{Q,m}(\alpha,q,a)=q^{-3}S(q,a)\rho\big(1/\eta\big)^{2}\int_{\mathbf{x}\in\mathcal{S}_{Q}}e\big(F_{m}(\mathbf{x})\big)d\mathbf{x}$$with $$F_{m}(\mathbf{x})=\beta T(m\mathbf{x})^{k},\ \ \ \ \ \ \mathcal{S}_{Q}=\Big\{\mathbf{x}\in \mathbb{R}^{3}:\ \ C_{1}Q\leq x_{1}\leq C_{2}Q,\ \ 0\leq x_{2},x_{3}\leq C_{3}Q\Big\}.$$
\end{lem}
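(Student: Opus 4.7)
The plan is to parallel the proof of Lemma \ref{lem6}, replacing the trivial progression count $B_r(x) = x/q + O(1)$ by the smooth-number count of Lemma \ref{lem8.1} in the two smooth variables $x_2, x_3$, and extracting the Dickman factor $\rho(1/\eta)^2$ along the way.

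First I would sort the sum $g_{Q,m}(\alpha)$ into arithmetic progressions modulo $q$. Writing $\alpha = a/q + \beta$ and noting that $T(m\bfx)^k \equiv T(m\bfr)^k \pmod{q}$ whenever $\bfx \equiv \bfr \pmod{q}$, one obtains
\begin{equation*}
g_{Q,m}(\alpha) = \sum_{\bfr \in [1,q]^{3}} e_q\bigl(a T(m\bfr)^k\bigr)\, K_{\bfr}(\beta),
\end{equation*}
where $K_{\bfr}(\beta)$ is the inner sum of $e(\beta T(m\bfx)^k)$ over $\bfx \in \mathcal{B}$ with $\bfx \equiv \bfr \pmod{q}$. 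Since $(m,q) = 1$, the substitution $\bfr \mapsto m\bfr \pmod{q}$ identifies $\sum_{\bfr} e_q(a T(m\bfr)^k)$ with $S(q,a)$, so it would suffice to prove that
\begin{equation*}
K_{\bfr}(\beta) = q^{-3}\rho(1/\eta)^{2}\int_{\mathcal{S}_{Q}} e(F_{m}(\bfx))\, d\bfx + O\bigl(q^{-3} E(Q)\bigr)
\end{equation*}
uniformly in $\bfr$.

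The argument proceeds by three successive applications of Abel's summation formula and integration by parts, one for each variable of $K_{\bfr}(\beta)$. For $x_{1}\in(C_{1}Q, C_{2}Q]$ the counting function along the arithmetic progression is $z/q + O(1)$, and the associated integration by parts exactly mimics that of Lemma \ref{lem6}. For $x_{2}, x_{3}\in\mathcal{A}(C_{3}Q, Q^{\eta})$, Lemma \ref{lem8.1} (applied with $P$ replaced by $Q$, which is legitimate since $\mathcal{A}(C_{3}Q, Q^{\eta})$ uses smoothness exponent $Q^{\eta}$) supplies
\begin{equation*}
A_{r_{i}}(z) = q^{-1}z\,\rho\!\Bigl(\tfrac{\log z}{\eta\log Q}\Bigr) + O\!\Bigl(\tfrac{z}{\log z}\Bigr).
\end{equation*}
The key simplification is then to replace the varying Dickman factor $\rho(\log z/(\eta\log Q))$ by the constant $\rho(1/\eta)$. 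Writing the argument as $1/\eta + \log(z/Q)/(\eta\log Q)$, I would split the integration range at $z = Q/\log Q$: on $[0, Q/\log Q]$ one uses the trivial bound $|\rho|\le 1$, absorbing a contribution of size $O(Q/\log Q)$ into the error; on $[Q/\log Q, C_{3}Q]$ the Lipschitz bound $|\rho(1/\eta + \theta) - \rho(1/\eta)| \ll |\theta|$ with $|\theta| \ll \log\log Q/\log Q$ gives a contribution of size $O(Q\log\log Q/\log Q)$.

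The smallness of the error from the three integrations by parts relies crucially on the major-arc hypothesis: on $\grN(a,q)$ one has $|\beta| \le q^{-1}(\log P)^{\kappa}P^{-3k}$, so that $|\beta|\,T(m\bfx)^{k} \ll (\log P)^{\kappa}$ and hence $|\partial_{x_{i}} e(\beta T(m\bfx)^{k})| \ll (\log P)^{\kappa}/Q$. This ensures that the total variation of $e(\beta T(m\bfx)^{k})$ in each variable is $O((\log P)^{\kappa})$, which, when paired with the $O(z/\log z)$ remainder of Lemma \ref{lem8.1} and the $z\rho'(\cdot)/(\eta\log Q)$ term that arises when differentiating the leading density, yields a cumulative error of $O(q^{-3}Q^{3}(\log Q)^{\kappa-1}\log\log Q)$. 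Multiplying by the $q^{3}$ residue classes then gives the claimed bound.

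The main obstacle is the careful bookkeeping of the error through three nested partial summations, since one must check that the $(\log P)^{\kappa}$ factor coming from the variation of $e(\beta T(m\bfx)^{k})$ combines with the $\log\log Q/\log Q$ factor from the Dickman approximation and the $1/\log Q$ factor from the tail of Lemma \ref{lem8.1} to produce precisely $(\log Q)^{\kappa-1}\log\log Q$. This balancing is what forces the precise value $\kappa = 1/5$ in the definition of $\grN$, and verifying it line by line is where the essential labour of the proof lies.
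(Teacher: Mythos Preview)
Your outline is correct and matches the paper's proof essentially step for step: sort into residue classes modulo $q$, use $(m,q)=1$ to recover $S(q,a)$, then peel off the three variables by Abel summation/integration by parts, invoking Lemma~\ref{lem8.1} for $x_2,x_3$ with the split at $Q/\log Q$ and the Lipschitz bound $|\rho(1/\eta)-\rho(\log z/(\eta\log Q))|\ll \log\log Q/\log Q$, and treating $x_1$ exactly as in Lemma~\ref{lem6}. One correction to your closing remark: nothing in this argument forces $\kappa=1/5$; as the paper notes just before the lemma, the estimate $E(Q)=Q^{3}(\log Q)^{\kappa-1}\log\log Q$ is nontrivial for every $0<\kappa<1$, and the specific choice $\kappa=1/5$ is made elsewhere (for the minor-arc treatment), not here.
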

\begin{proof}For ease of notation, we omit the subscripts for the rest of the proof. We combine the ideas of the proof of Lemma \ref{lem6} with the analysis of the distribution of smooth numbers discussed above. For such purposes, it is convenient to define first $$K_{\mathbf{r}}(\beta;m)=\sum_{\substack{\mathbf{x}\in\mathcal{B}\\ \mathbf{x}\equiv\mathbf{r}\pmod{q}}}e\big(F_{m}(\mathbf{x})\big).$$ Observe that by sorting the summation into arithmetic progressions modulo $q$ one has
\begin{equation}\label{halp}
g_{Q,m}(\alpha)=\sum_{\mathbf{r}\leq q}e_{q}\big(aT(m\mathbf{r})^{k}\big)K_{\mathbf{r}}(\beta;m).
\end{equation} 
For each $\mathbf{r}\in\mathbb{N}^{3},$ write $\mathbf{r}=(\mathbf{r}_{1},r_{3})$ with $\mathbf{r}_{1}=(r_{1},r_{2}).$ Then, we find that
$$K_{\mathbf{r}}(\beta;m)=\sum_{\mathbf{x}_{1}}\int_{0}^{C_{3}Q}e\big(F_{m}(\mathbf{x}_{1},x)\big)dA_{r_{3}}(x),$$ where the integral on the right side is the Riemann-Stieltjes integral and $\mathbf{x}_{1}$ runs over the set $$\mathcal{C}_{\mathbf{r}_{1}}=\Big\{\mathbf{x}_{1}\in\mathbb{N}^{2},\ \ C_{1}Q< x_{1}\leq C_{2}Q,\ \ x_{2}\in\mathcal{A}(C_{3}Q,Q^{\eta}),\ \ \mathbf{x}_{1}\equiv \mathbf{r}_{1}\pmod{q}\Big\}.$$ The reader may find useful to observe that the contribution of the set $[0,Q/\log Q]$ to the above integral is $O\big(q^{-3}Q^{3}(\log Q)^{-1}\big)$, whence integration by parts yields
\begin{align*}
K_{\mathbf{r}}(\beta;m)=&
A_{r_{3}}(C_{3}Q)\sum_{\mathbf{x}_{1}}e\big(F_{m}(\mathbf{x}_{1},C_{3}Q)\big)-\int_{Q/\log Q}^{C_{3}Q}\sum_{\mathbf{x}_{1}}\frac{\partial }{\partial z}e\big(F_{m}(\mathbf{x}_{1},z)\big)A_{r_{3}}(z)dz
\\
&+O\big(q^{-3}Q^{3}(\log Q)^{-1}\big).
\end{align*}
Observe that $(mQ)^{3k}\lvert\beta\rvert\ll n\lvert\beta\rvert\ll q^{-1}(\log Q)^{\kappa}$. Therefore, the integral of the error term that arises when we approximate $A_{r_{3}}(z)$ in the above equation is bounded above by $$\int_{Q/\log Q}^{C_{3}Q}\sum_{\mathbf{x}_{1}}\Big\lvert\frac{\partial F_{m}}{\partial z}(\mathbf{x}_{1},z)\Big\rvert\frac{z}{\log z}dz\ll q^{-3}Q^{3}(\log Q)^{\kappa-1}.$$ 

Before embarking in the process of giving a better description of the above equation, we recall that an application of the mean value theorem gives that for any $w\in [Q/\log Q,C_{3}Q]$ then
\begin{equation*}\label{tuii}\Big\lvert\rho\Big(\frac{1}{\eta}\Big)-\rho\Big(\frac{\log w}{\eta \log Q}\Big)\Big\rvert\ll \frac{\log\log Q}{\log Q}.\end{equation*}
Consequently, by Lemma \ref{lem8.1} and the preceding discussion we obtain 
\begin{align*}\label{Keta}K_{\mathbf{r}}(\beta;m)=&
q^{-1}\rho\big(1/\eta\big)C_{3}Q\sum_{\mathbf{x}_{1}}e\big(F_{m}(\mathbf{x}_{1},C_{3}Q)\big)
\\
&-q^{-1}\rho\big(1/\eta\big)\int_{Q/\log Q}^{C_{3}Q}z\sum_{\mathbf{x}_{1}}\frac{\partial }{\partial z}e\big(F_{m}(\mathbf{x}_{1},z)\big)dz+O\big(q^{-3}E(Q)\big),
\end{align*}
and hence integration by parts yields 
$$K_{\mathbf{r}}(\beta;m)=q^{-1}\rho\big(1/\eta\big)\int_{0}^{C_{3}Q}\sum_{\mathbf{x}_{1}}e\big(F_{m}(\mathbf{x}_{1},z)\big)dz+O\big(q^{-3}E(Q)\big),$$ where we implicitly used that the term arising after evaluating at the endpoints and the contribution of the interval $[0,Q/\log Q]$ to the above integral is $O(q^{-3}Q^{3}(\log Q)^{-1}).$ Likewise, applying a similar procedure for the second variable one gets
\begin{equation}\label{Kq}K_{\mathbf{r}}(\beta;m)=q^{-2}\rho\big(1/\eta\big)^{2}\int_{\mathbf{y}\in[0,C_{3}Q]^{2}}\sum_{x}e\big(F_{m}(x,\mathbf{y})\big)d\mathbf{y}+O\big(q^{-3}E(Q)\big),\end{equation}
where $x$ runs over the range $C_{1}Q< x\leq C_{2}Q$ and $x\equiv r_{1}\mmod{q}$.
For the first variable, we follow the same procedure as in Lemma \ref{lem6} to obtain 
$$\sum_{x\equiv r_{1}\mmod{q}}e\big(F_{m}(x,\mathbf{y})\big)=q^{-1}\int_{C_{1}Q}^{C_{2}Q}e\big(F_{m}(x,\mathbf{y})\big)dx+O\big(1+q^{-1}(\log Q)^{\kappa}\big).$$ Consequently, combining the above equation with (\ref{Kq}) we get
$$K_{\mathbf{r}}(\beta;m)=q^{-3}\rho\big(1/\eta\big)^{2}\int_{\mathbf{x}\in\mathcal{S}_{Q}}e\big(F_{m}(\mathbf{x})\big)d\mathbf{x}+O\big(q^{-3}E(Q)\big).$$Observe that since $(m,q)=1$ then a change of variables yields $S(q,am^{3k})=S(q,a).$ The lemma then follows by the preceding discussion and (\ref{halp}).
\end{proof}
\begin{cor}\label{corcor}
Let $\alpha\in \grN(a,q)$ where $a\in\mathbb{Z}$, $q\in\mathbb{N}$ with $(a,q)=1$ and $q\leq (\log P)^{\kappa}$. Consider $\beta=\alpha-a/q.$ One has that
$$g(\alpha)=W(\alpha,q,a)+O(P^{3}(\log P)^{\kappa-1}\log\log P),$$ where on recalling (\ref{vbet}) we take
$$W(\alpha,q,a)=q^{-3}S(q,a)\rho(1/\eta)^{2}v(\beta).$$
\end{cor}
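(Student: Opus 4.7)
The plan is to derive this corollary as a direct specialization of Lemma \ref{lem888}. The key observation is that the form $T(\mathbf{x}) = x_1^3 + x_2^3 + x_3^3$ is symmetric under permutation of coordinates, so in the definition of $g(\alpha)$ one is free to relabel which two variables are smooth. After permuting coordinates so that the unrestricted variable plays the role of $x_1$ and the two smooth variables in $\mathcal{C}(P)$ play the roles of $x_2, x_3$, the sum $g(\alpha)$ becomes precisely the sum $g_{Q,m}(\alpha)$ appearing in Lemma \ref{lem888}, specialized to the parameter choice
$$m = 1, \qquad Q = P, \qquad C_1 = 0, \qquad C_2 = 1, \qquad C_3 = 1.$$

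Next I would check that the hypotheses of Lemma \ref{lem888} are met: one has $mQ = P \asymp P$ and $(m,q) = (1,q) = 1$, while $q \leq (\log P)^{\kappa}$ is the standing assumption of the corollary. The cited lemma therefore delivers
$$g(\alpha) = V_{P, 1}(\alpha, q, a) + O\big(E(P)\big),$$
with error $E(P) = P^{3}(\log P)^{\kappa-1}\log\log P$, which matches exactly the error term asserted in the corollary.

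Finally I would match up the main terms. Since $m=1$, the exponential sum factor $S(q, a m^{3k})$ in Lemma \ref{lem888} simplifies to $S(q, a)$. With the chosen constants $(C_1, C_2, C_3) = (0, 1, 1)$ and $Q = P$, the region $\mathcal{S}_P$ collapses to the cube $[0, P]^{3}$, on which the integral $\int_{\mathcal{S}_P} e(\beta T(\mathbf{x})^{k})\, d\mathbf{x}$ coincides with $v(\beta)$ as defined in (\ref{vbet}). Consequently
$$V_{P, 1}(\alpha, q, a) = q^{-3} S(q, a)\, \rho(1/\eta)^{2}\, v(\beta) = W(\alpha, q, a),$$
which yields the corollary.

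Structurally there is no real obstacle here, as the substantive analytic content (Abel summation, the smooth-number density estimate from Lemma \ref{lem8.1}, the Riemann--Stieltjes manipulation, and the approximation of the inner sum by an integral) has already been carried out in Lemma \ref{lem888}; the corollary is a cosmetic repackaging for the particular counting problem at hand. The only minor point to verify carefully is that the integer range $1 \leq x_i \leq P$ coming from the definition of $\mathcal{C}(P)$ agrees, for integer-valued variables, with the strict inequality $0 < x_1 \leq P$ that arises from the specialization $C_1 = 0$ in the set $\mathcal{B}$.
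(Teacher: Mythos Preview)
Your proposal is correct and matches the paper's own proof exactly: the paper simply observes that $g(\alpha)=g_{P,1}(\alpha)$ with $C_{1}=0$, $C_{2}=C_{3}=1$ and invokes Lemma \ref{lem888}. Your version is just more explicit in noting the harmless coordinate permutation (since in $\mathcal{C}(P)$ the smooth variables are $x_{1},x_{2}$ whereas in $\mathcal{B}$ they are $x_{2},x_{3}$) and in verifying that $V_{P,1}(\alpha,q,a)=W(\alpha,q,a)$; one small inaccuracy is that the statement of Lemma \ref{lem888} already has $S(q,a)$ rather than $S(q,am^{3k})$, but with $m=1$ this is moot.
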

\begin{proof}
Note that $g(\alpha)=g_{P,1}(\alpha)$ with the choices $C_{1}=0$, $C_{2}=1$ and $C_{3}=1$. The result is then a consequence of the previous lemma.
\end{proof}
In the rest of the section we deduce an asymptotic formula for the contribution over the major arcs. For such purposes, consider the integral
$$R_{\grN}(n)=\int_{\grN}g(\alpha)^{s}e(-\alpha n)d\alpha.$$ Define the auxiliary function $g^{*}(\alpha)$ for $\alpha\in[0,1)$ by putting $g^{*}(\alpha)=W(\alpha,q,a)$ when $\alpha\in\grN(a,q)\subset\grN$ and $g^{*}(\alpha)=0$ for $\alpha\in\grn.$ 

\begin{prop}\label{thm10.1}
Let $s\geq \max(5,k+2)$. Then, there exists a constant $\delta>0$ such that
$$R_{\grN}(n)=\Gamma\big(4/3\big)^{3s}\Gamma\big(1+1/k\big)^{s}\Gamma\big(s/k\big)^{-1}\rho\big(1/\eta\big)^{2s}\mathfrak{S}(n)n^{s/k-1}+O(n^{s/k-1}(\log n)^{-\delta}).$$
\end{prop}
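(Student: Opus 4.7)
The plan is to follow the same pattern as the proof of Proposition \ref{prop7}, with Corollary \ref{corcor} playing the role of Lemma \ref{lem6}. I would replace $g(\alpha)$ on the major arcs by the auxiliary function $g^*(\alpha)$, then complete the resulting truncated singular series and singular integral to their infinite analogues, and finally identify the main term via (\ref{ec7.3}) and Proposition \ref{prop66}. The factor $\rho(1/\eta)^{2s}$ arises because each of the $s$ copies of the approximation $W(\alpha,q,a)$ contributes the prefactor $\rho(1/\eta)^2$ coming from the two smooth coordinates in each triple.

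The first step is to control $\int_\grN|g(\alpha)^s - g^*(\alpha)^s|d\alpha$. Combining the elementary inequality $|a^s-b^s|\ll|a-b|\max(|a|,|b|)^{s-1}$ with Corollary \ref{corcor} and the trivial bound $g,g^*\ll P^3$ gives the pointwise estimate $|g^s-g^{*s}|\ll P^{3s}(\log P)^{\kappa-1}\log\log P$. Since ${\rm mes}(\grN)\ll(\log P)^{2\kappa}P^{-3k}$, this integrates to $O(P^{3s-3k}(\log P)^{3\kappa-1}\log\log P)$, which with $\kappa=1/5$ is of the required order $n^{s/k-1}(\log n)^{-\delta}$.

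The second step is to evaluate $\int_\grN g^*(\alpha)^s e(-\alpha n)d\alpha$. Substituting the definition of $g^*$ factorises this as
$$\rho(1/\eta)^{2s}\sum_{q\leq(\log P)^\kappa}S_n(q)\int_{|\beta|\leq q^{-1}(\log P)^\kappa/n}v(\beta)^s e(-\beta n)d\beta.$$
I would then complete the outer sum to all $q\in\mathbb{N}$ and each inner integral to $\mathbb{R}$. The first completion costs $|J(n)|\sum_{q>(\log P)^\kappa}|S_n(q)|\ll n^{s/k-1}(\log n)^{-\delta}$ by the tail estimate in (\ref{Sas}). For the second, Lemma \ref{cota} gives $\int_{|\beta|>q^{-1}(\log P)^\kappa/n}|v(\beta)|^s d\beta\ll P^{3s-3k}q^{s/k-1}(\log P)^{-\kappa(s/k-1)}$; summing against $|S_n(q)|$ using the first part of (\ref{Sas}) together with the bound $q^{s/k-1}\leq(\log P)^{\kappa(s-k-1)/k}q^{1/k}$ for $q\leq(\log P)^\kappa$, this contributes a total of $O(P^{3s-3k}(\log P)^{-\kappa/k+\varepsilon})$, which is again admissible. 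After these two completions, the expression equals $\rho(1/\eta)^{2s}\mathfrak{S}(n)J(n)$, and (\ref{ec7.3}) delivers the stated main term.

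The main obstacle is that Corollary \ref{corcor} supplies only a logarithmic saving over the trivial estimate, not a power of $P$ as in Lemma \ref{lem6}. One therefore has to extract the gain from the small measure of $\grN$ instead. The choice $\kappa=1/5$ is precisely what makes $3\kappa-1<0$ so that the approximation step succeeds, while still leaving $\kappa/k$ large enough for the completion of the singular series and singular integral to yield savings of the form $(\log n)^{-\delta}$.
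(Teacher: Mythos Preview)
Your proposal is correct and follows essentially the same route as the paper's proof: approximate $g$ by $g^*$ via Corollary \ref{corcor}, then complete the truncated singular series and singular integral using Lemmata \ref{cor4} and \ref{cota}, exactly as you outline. The only minor difference is in the approximation step: the paper bounds $\max(|g|,|g^*|)^{s-1}$ by $P^{3(s-1)}(1+n|\beta|)^{-(s-1)/k}$ using Lemma \ref{cota} (together with the observation that $(1+n|\beta|)^{-1/k}\geq(\log P)^{\kappa-1}$ on $\grN$, so the error term in Corollary \ref{corcor} is dominated by the main term), whereas you use the trivial bound $P^{3(s-1)}$ and rely on ${\rm mes}(\grN)\ll(\log P)^{2\kappa}P^{-3k}$. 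After integrating, both arguments produce the same estimate $P^{3s-3k}(\log P)^{3\kappa-1+\varepsilon}$, so your simplification costs nothing.
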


\begin{proof}
Observe that by the definition (\ref{kioto}) then for $\alpha\in\grN(a,q)$ and $\beta=\alpha-a/q$ one has $$(1+n\lvert\beta\rvert)^{-(s-1)/k}\geq (\log P)^{-(s-1)\kappa/k}\geq (\log P)^{(s-1)(\kappa-1)}.$$ Consequently, Lemmata \ref{cota} and Corollary \ref{corcor} yield
$$g(\alpha)^{s}-g^{*}(\alpha)^{s}\ll P^{3s}(\log P)^{\kappa-1+\varepsilon}(1+n\lvert\beta\rvert)^{-(s-1)/k},$$
whence integrating over the major arcs we obtain that
\begin{align*}\int_{\grN}\big\lvert g(\alpha)^{s}-g^{*}(\alpha)^{s}\big\rvert d\alpha\ll P^{3s-3k}(\log P)^{-\delta}.
\end{align*} 
Observe that by Lemmata \ref{cor4} and \ref{cota} respectively we have $$\displaystyle\sum_{ q>(\log P)^{\kappa}}\lvert S_{n}(q)\rvert\ll (\log P)^{-\delta}\ \ \ \ \text{and}\ \ \ \ \ \displaystyle\int_{\lvert \beta\rvert> \frac{(\log P)^{\kappa}}{qn}}\lvert v(\beta)\rvert^{s}d\beta\ll P^{3s-3k}q^{\delta}(\log P)^{-\delta\kappa},$$ where in the second integral $q\leq (\log P)^{\kappa}$. Consequently, the combination of the aforementioned lemmas, equations (\ref{Sas}) and (\ref{ec7.3}) and the above bounds deliver the theorem.
\end{proof}
Theorem \ref{thm9.3} then follows by the application of Proposition \ref{prop66}, the estimate for the minor arcs in Proposition \ref{prop123} and the above proposition.
\section{Lower bound for $r(n)$.}\label{Sec999}
In this section we prove Theorem \ref{thm9.2} via an application of Theorem \ref{thm9.3}. The main idea is to show that the contribution to $R_{\eta}(n)$ of tuples whose coordinates have many representations as sums of three positive integral cubes is fairly small. We present first some notation and a simple lemma which will be used in the proof. Recalling the definition for $s_{3}(n)$ described before (\ref{ec12}) and the parameter $\nu$ presented right after that equation, let $\theta=\nu/ k$ and consider the set 
$$S_{K}(n)=\Big\{m\in\mathbb{N}:\ 1\leq m\leq n^{1/k}:\ s_{3}(m)>Kn^{\theta}\Big\},$$ where $K>0$.
\begin{lem}\label{lemup}
Let $n\in\mathbb{N}$ and $K>0$. Then
$$\sum_{m\in S_{K}(n)}s_{3}(m)\ll K^{-1}n^{1/k}.$$
\end{lem}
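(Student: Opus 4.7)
The plan is to exploit the defining property of $S_K(n)$ together with the $L^2$ estimate (\ref{ec12}) that was quoted from Wooley. Since $s_3(m) > Kn^\theta$ for every $m \in S_K(n)$, I can upgrade the sum $\sum s_3(m)$ into a second-moment sum at the cost of a factor $(Kn^\theta)^{-1}$. Concretely, I would write
\begin{equation*}
\sum_{m \in S_K(n)} s_3(m) \;=\; \sum_{m \in S_K(n)} \frac{s_3(m)}{s_3(m)} \, s_3(m) \;<\; \frac{1}{K n^\theta} \sum_{m \in S_K(n)} s_3(m)^{2}.
\end{equation*}

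Dropping the restriction to $S_K(n)$ in the squared sum and extending the range to all $m \leq C n^{1/k}$ for a suitable absolute constant $C$ (this is harmless because $s_3(m)$ is supported on integers representable as $T(\mathbf{x})$ with $\mathbf{x} \leq P$, so $m \leq 3P^3 \asymp n^{1/k}$), I then invoke the Wooley bound (\ref{ec12}) with $X \asymp n^{1/k}$ to obtain
\begin{equation*}
\sum_{m \leq C n^{1/k}} s_3(m)^{2} \;\ll\; (n^{1/k})^{1+\nu} \;=\; n^{1/k} \cdot n^{\nu/k} \;=\; n^{1/k + \theta},
\end{equation*}
recalling that $\theta = \nu/k$.

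Combining the two displays gives
\begin{equation*}
\sum_{m \in S_K(n)} s_3(m) \;\ll\; \frac{n^{1/k + \theta}}{K n^\theta} \;=\; K^{-1} n^{1/k},
\end{equation*}
which is precisely the claimed bound. There is essentially no obstacle here: the only non-trivial ingredient is the Wooley estimate (\ref{ec12}), and the step of converting an $L^1$ sum into an $L^2$ sum via the pointwise lower bound on $S_K(n)$ is a standard Chebyshev-type manoeuvre. (One could equivalently reach the same conclusion by Cauchy--Schwarz, after first bounding $|S_K(n)| \ll K^{-2} n^{1/k - \theta}$ from (\ref{ec12}), but the direct route above is cleaner.)
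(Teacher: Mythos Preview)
Your proof is correct and follows exactly the same route as the paper: the paper also bounds $\sum_{m\in S_K(n)} s_3(m)\ll K^{-1}n^{-\theta}\sum_{m\in S_K(n)} s_3(m)^2$ via the defining inequality of $S_K(n)$ and then applies (\ref{ec12}) with $X\asymp n^{1/k}$. Your write-up simply spells out the steps in more detail.
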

\begin{proof} It follows by noting that 
$$\sum_{m\in S_{K}(n)}s_{3}(m)\ll K^{-1}n^{-\theta}\sum_{m\in S_{K}(n)}s_{3}(m)^{2}\ll K^{-1}n^{1/k},$$
where in the last step we used (\ref{ec12}).
\end{proof}Define $R_{1}(n)$ as the contribution to $R_{\eta}(n)$ of tuples $\mathbf{X}\in\mathscr{C}^{s}$ for which $x_{i}\in S_{K}(n)$ for some $i$. Likewise, let $R_{0}(n)$ be the contribution to $R_{\eta}(n)$ of tuples $\mathbf{X}\in\mathscr{C}^{s}$ with $x_{i}\notin S_{K}(n)$ for every $1\leq i\leq s$. Observe that with this notation then one has
\begin{equation}\label{puff}R_{\eta}(n)=R_{0}(n)+R_{1}(n).\end{equation}
Note that by orthogonality, Theorem \ref{thm9.3} and Lemma \ref{lemup} one finds that
\begin{align*}R_{1}(n)&
\ll \sum_{m\in S_{K}(n)}s_{3}(m)\int_{0}^{1}g(\alpha)^{s-1}e\big(-\alpha (n-m^{k})\big)\ d\alpha\ll n^{(s-1)/k-1}\sum_{m\in S_{K}(n)}s_{3}(m)
\\
&\ll K^{-1}n^{s/k-1}.
\end{align*}
whenever $s-1\geq H(k)$. Therefore, taking $K$ to be big enough in terms of $k$ and $s$, we have by Theorem \ref{thm9.3} and (\ref{puff}) that $R_{\eta}(n)\asymp R_{0}(n)$. Since each representation of $n$ as a sum of $k$-th powers of elements of $\mathscr{C}$ is counted at most $s_{3}(x_{1})\dots s_{3}(x_{s})\leq K^{s}n^{s\theta}$ times by $R_{0}(n)$, we find that
$$r(n)\gg n^{s/k-1-s\theta}=n^{(1-\nu)s/k-1},$$ which delivers Theorem \ref{thm9.2}.
\appendix
\section{Asymptotic formula for small powers}
We improve the constraint on the number of variables in Theorem \ref{thm9.1} for the cases $2\leq k\leq 7$ by interpolating between some restriction estimates and mean value bounds for Weyl sums over the minor arcs computed in Proposition \ref{prop222}. In the following lemma we present the aforementioned restriction estimate bounds, but first define $r(k)=2^{k}$ for $2\leq k\leq 3$ and $r(k)=k(k+1)$ when $4\leq k\leq 7.$ For the sake of conciseness, we omit writing the dependence on $k$ for the rest of the section.
\begin{lem}\label{lem123}
One has that
$$\int_{0}^{1}\lvert f(\alpha)\rvert^{r}{\rm     d}\alpha\ll P^{13r/4-3k+\varepsilon}.$$
\end{lem}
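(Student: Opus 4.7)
The plan is to reinterpret $f(\alpha)$ as a weighted exponential sum over pure $k$-th powers and invoke a coefficient-bearing restriction inequality together with the $L^2$ bound for $r_3$ recorded in (\ref{rap}). Grouping the summation over $\mathbf{y}\in[1,P]^{3}$ by the value $n=T(\mathbf{y})$ gives
$$f(\alpha)=\sum_{1\leq n\leq 3P^{3}}r_{3}(n)\,e(\alpha n^{k}),$$
which cleanly separates the $k$-th power structure from the underlying sum-of-three-cubes counting function.

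The central input I would invoke is the strong restriction inequality for $k$-th powers in its coefficient-bearing form: for any sequence $(a_{n})_{n\leq N}$ of complex numbers and for $r\geq r(k)$,
$$\int_0^1\Big|\sum_{n\leq N}a_{n}\,e(\alpha n^{k})\Big|^{r}d\alpha\ll_{\varepsilon} N^{r/2-k+\varepsilon}\,\|a\|_{\ell^{2}}^{r}.$$
For $k\in\{2,3\}$ with $r=2^{k}$ this is a weighted form of Hua's lemma, obtainable by expanding the $r$-th moment and using the divisor bound for the number of representations as a sum of two $k$-th powers. For $4\leq k\leq 7$ with $r=k(k+1)$ it is the restriction estimate on the $k$-th power curve flowing from the Bourgain--Demeter--Guth resolution of the Vinogradov main conjecture together with subsequent work of Wooley (see Bourgain \cite{Bou}). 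A sanity check at $a_{n}\equiv 1$ recovers the expected main term $N^{r-k+\varepsilon}$, so the exponents are correctly calibrated.

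Applying this inequality with $a_{n}=r_{3}(n)$ and $N=3P^{3}$, and inserting the bound $\sum_{n\leq 3P^{3}}r_{3}(n)^{2}\ll P^{7/2+\varepsilon}$ from (\ref{rap}), the conclusion of the lemma would follow from the direct computation
$$\int_0^1|f(\alpha)|^{r}d\alpha\ll P^{3(r/2-k)+\varepsilon}\bigl(P^{7/2+\varepsilon}\bigr)^{r/2}=P^{3r/2-3k+7r/4+\varepsilon}=P^{13r/4-3k+\varepsilon}.$$
The saving comes entirely from marrying the correct critical exponent for the $k$-th power restriction with the sharp $L^{2}$-estimate for $r_{3}$. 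The only substantive obstacle is locating the restriction estimate in precisely the coefficient-bearing shape above: for $k\leq 3$ this is elementary (a weighted Hua expansion), while for $4\leq k\leq 7$ it is now standard via the decoupling / Vinogradov machinery on the $k$-th power curve.
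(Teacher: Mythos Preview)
Your approach is essentially identical to the paper's: rewrite $f(\alpha)=\sum_{n\le 3P^{3}}r_{3}(n)e(\alpha n^{k})$, apply a coefficient-bearing restriction estimate of the shape $\int_{0}^{1}\lvert\sum a_{n}e(\alpha n^{k})\rvert^{r}d\alpha\ll N^{r/2-k+\varepsilon}\lVert a\rVert_{2}^{r}$, and finish with (\ref{rap}). The arithmetic computation is the same as the paper's.

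The only substantive gap is your justification of the restriction input for $k=3$. The claim that the weighted eighth moment for cubes follows ``by expanding the $r$-th moment and using the divisor bound for the number of representations as a sum of two $k$-th powers'' does not deliver the sharp exponent. Expanding $\int\lvert g\rvert^{8}$ leads to a weighted count over $n_{1}^{3}+\cdots+n_{4}^{3}=n_{5}^{3}+\cdots+n_{8}^{3}$; pairing variables as $(n_{1}^{3}+n_{2}^{3})+(n_{3}^{3}+n_{4}^{3})$ and invoking the divisor bound on each pair still leaves you with the number of ways to write an integer as a sum of two numbers each of which is a sum of two cubes, and that quantity is \emph{not} $O(N^{\varepsilon})$. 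The elementary route that does work is the fourth moment via the factorisation of $n^{3}-n'^{3}$, giving $\int\lvert g\rvert^{4}\ll N^{\varepsilon}\lVert a\rVert_{2}^{4}$, but bootstrapping this to the eighth moment by $\lVert g\rVert_{\infty}^{4}\int\lvert g\rvert^{4}$ loses a full factor of $N$. The paper accordingly cites Hughes--Wooley \cite{Hug} for precisely this case; for $k=2$ it cites Bourgain \cite{Bou1} (though your elementary argument is valid there), and for $4\le k\le 7$ it cites Wooley \cite[Corollary~1.4]{Woo4}, noting that an orthogonality device in the spirit of (\ref{puu}) is used to pass from the weighted Vinogradov mean value to the monomial curve. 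Replacing your sketch of provenance with these references would make the argument complete.
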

\begin{proof}
Note that recalling the definition of $r_{3}(n)$ before (\ref{Rs}) we can rewrite the exponential sum $f(\alpha)$ as $$f(\alpha)=\displaystyle\sum_{x\leq 3P^{3}}r_{3}(x)e(\alpha x^{k}).$$
We then apply mean value estimates of Bourgain \cite[(1.6)]{Bou1} when $k=2$, Hughes and Wooley \cite[Theorem 4.1]{Hug} for the case $k=3$ and Wooley \cite[Corollary 1.4]{Woo4} when $4\leq k\leq 7$ to obtain
\begin{align*}\int_{0}^{1}\lvert f(\alpha)\rvert^{r}d\alpha\ll  P^{3r/2-3k+\varepsilon}\Big(\sum_{1\leq m\leq 3P^{3}} r_{3}(m)^{2}\Big)^{r/2}.\end{align*} Observe that the cited result for the case $4\leq k\leq 7$ is the weighted version of Vinogradov's mean value theorem. As experts will realise, we can apply such result to obtain the estimate that we use herein via a similar argument than the one used in (\ref{puu}). The lemma then follows by combining the previous bound with (\ref{rap}).
\end{proof}
Before describing the rest of the proof it is convenient to introduce some parameters. Take $h=\lfloor (k+1)/2\rfloor.$ Consider $p=1+r/4\xi_{0}$ and the exponents $q=p/(p-1)$ and $t=r/p+3k(3k+1)/q$, where $\xi_{0}$ is defined as the positive root of the quadratic equation obtained imposing the condition $\xi_{0}=1-1/(t-2h+1)$. Let $s=\lceil t\rceil.$ Both the values of $s$ and $t$ are gathered in Table \ref{chart2}. The following statement improves the number of variables obtained in Proposition \ref{prop222} by interpolating the estimates that we get in the second part of Proposition \ref{prop222} with Lemma \ref{lem123}. 
\begin{table}\label{table2}
\begin{tabular}{ | m{1cm} | m{1.5cm}| m{1.5cm}  | m{1.5cm}  | m{1.5cm}  | m{1.5cm} | m{1.5cm} |} 
\hline
$k$ &  $2$ & $3$ & $4$ & $5$ & $6$ & $7$ \\ 
\hline
$s$ &  $24$ & $63$ & $134$ & $216$ & $316$ & $435$ \\ 
\hline
$t$ &  $23.4331$ & $62.9722$ & $133.4783$ & $215.3978$ & $315.9897$ & $434.9924$ \\ 
\hline
\end{tabular}
\caption{}
\label{chart2}
\end{table}
\begin{prop}\label{prop24}
One has that
$$\int_{\grm}\lvert f(\alpha)\rvert^{s} {\rm     d}\alpha\ll P^{3s-3k-\delta},$$
where we take the minor arcs $\grm$ to be as described right after (\ref{5us}) with $\xi$ on the range $\xi_{0}<\xi<1-1/(s-2h+1)$.
\end{prop}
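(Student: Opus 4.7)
My approach is to interpolate, via H\"older's inequality, between the restriction-style estimate of Lemma \ref{lem123} (which is sharp at exponent $r$ but is valid on all of $[0,1)$, and thus carries no minor arc saving) and the second part of Proposition \ref{prop222}, which yields a genuine $P^{\xi}$ saving on $\grm$ at the cost of requiring the large exponent $3k(3k+1)$. The parameters $p$, $q$ and $t$ have been tailored so that $t = r/p + 3k(3k+1)/q$, which is exactly the condition allowing a single application of H\"older's inequality with dual exponents $p, q$ to combine the two bounds.

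First I would write
\begin{equation*}
\int_{\grm}\lvert f(\alpha)\rvert^{t}\,d\alpha \;\leq\; \Big(\int_{0}^{1}\lvert f(\alpha)\rvert^{r}d\alpha\Big)^{1/p}\Big(\int_{\grm}\lvert f(\alpha)\rvert^{3k(3k+1)}d\alpha\Big)^{1/q},
\end{equation*}
and insert Lemma \ref{lem123} into the first factor and the second bound of Proposition \ref{prop222} into the second factor (this latter application demands $\xi<1-1/(s-2h+1)$, which is exactly the upper constraint in the hypothesis). A direct calculation, expanding $(13r/4-3k)/p + (9k(3k+1)-3k-\xi)/q$ and using $1/p+1/q=1$ together with $3t=3r/p+9k(3k+1)/q$, shows that the resulting exponent equals
\begin{equation*}
3t - 3k + \frac{r}{4p} - \frac{\xi}{q} + \varepsilon.
\end{equation*}
Since $q=p/(p-1)$ and $p-1 = r/(4\xi_{0})$, one has $(r/(4p))\cdot q = r/(4(p-1)) = \xi_{0}$, so the net saving works out to $(\xi-\xi_{0})/q$, which is strictly positive precisely when $\xi>\xi_{0}$. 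Hence the interpolation delivers
\begin{equation*}
\int_{\grm}\lvert f(\alpha)\rvert^{t}\,d\alpha\ll P^{3t-3k-\delta}
\end{equation*}
for some $\delta>0$.

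To conclude, since $s=\lceil t\rceil$ we have $0\leq s-t<1$, so a single appeal to the trivial bound $\lvert f(\alpha)\rvert\leq P^{3}$ yields
\begin{equation*}
\int_{\grm}\lvert f(\alpha)\rvert^{s}\,d\alpha\leq P^{3(s-t)}\int_{\grm}\lvert f(\alpha)\rvert^{t}\,d\alpha\ll P^{3s-3k-\delta},
\end{equation*}
which is the claim. The main delicate point is tracking the exponent algebra in the interpolation step and checking that $\xi_{0}$ is exactly the critical value at which the saving appears; the upper bound $\xi<1-1/(s-2h+1)$ is a technical constraint inherited from the range of applicability of the modified form of \cite[Theorem 14.4]{Woo4} used inside Proposition \ref{prop222}, and the fact that $s=\lceil t\rceil$ is tuned so as to be compatible with it.
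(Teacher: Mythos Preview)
Your proof is correct and follows essentially the same route as the paper: a single H\"older interpolation between Lemma \ref{lem123} at exponent $r$ and the second estimate of Proposition \ref{prop222} at exponent $3k(3k+1)$, followed by the trivial bound $\lvert f(\alpha)\rvert\leq P^{3}$ to pass from $t$ to $s=\lceil t\rceil$. Your exponent bookkeeping is more explicit than the paper's, and correctly identifies $(\xi-\xi_{0})/q$ as the net saving.

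One small inaccuracy in your commentary: the upper constraint $\xi<1-1/(s-2h+1)$ is \emph{not} required for the application of Proposition \ref{prop222} (that result only needs $\xi<1$). It is imposed in the statement of Proposition \ref{prop24} because it is needed later, in the refined major arc analysis following the proposition (specifically when bounding the integral of $F_{1}(\alpha)$). This does not affect the validity of your argument, since the range $\xi_{0}<\xi<1-1/(s-2h+1)$ is contained in $\xi<1$.
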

\begin{proof} By H\"older's inequality, Proposition \ref{prop222} and Lemma \ref{lem123} we obtain that
\begin{align*}\int_{\grm}\lvert f(\alpha)\rvert^{t} {\rm     d}\alpha\ll
& \Big(\int_{0}^{1}\lvert f(\alpha)\rvert ^{r}d\alpha\Big)^{1/p}\Big(\int_{\grm}\lvert f(\alpha)\rvert ^{3k(3k+1)}d\alpha\Big)^{1/q}
\\
&\ll \big(P^{13r/4-3k+\varepsilon}\big)^{1/p}\big(P^{27k^{2}+6k-\xi+\varepsilon}\big)^{1/q}\ll P^{3t-3k-\delta},
\end{align*}from where the lemma follows by observing that $t<s.$
\end{proof}
The rest of the appendix is devoted to make a refinement of the argument used in Proposition \ref{prop7} to enlarge the major arcs by taking $\xi$ on the range described above and win one variable for the cases $k=3,6$ and $7$. Let $q<P$. Denote by $N(q,P)$ to the number of solutions of the congruence 
$$T(\mathbf{x}_{1})^{k}+\ldots+T(\mathbf{x}_{h})^{k}\equiv T(\mathbf{y}_{1})^{k}+\ldots+T(\mathbf{y}_{h})^{k}\pmod{q},$$where $0\leq\mathbf{x}_{i},\mathbf{y}_{i}\leq P.$ By expressing $q$ as the product of prime powers, using the structure of the ring of integers of these prime powers and noting that the number of primes dividing $q$ is bounded by $q^{\varepsilon}$ we obtain $N(q,P)\ll q^{\varepsilon-1}P^{2h},$ and hence orthogonality yields
\begin{equation}\label{oror}\sum_{a=1}^{q}\lvert f(\beta+a/q)\rvert^{2h}\ll qN(q,P)\ll q^{\varepsilon}P^{2h}.\end{equation}
Now consider the difference function $D(\alpha)=f(\alpha)-f^{*}(\alpha).$ By the triangle inequality one has
$$\lvert f(\alpha)^{s}-f^{*}(\alpha)^{s}\rvert \ll F_{1}(\alpha)+F_{2}(\alpha),$$
where $F_{1}(\alpha)=\lvert f(\alpha)\rvert^{2h}\lvert D(\alpha)\rvert\big( \lvert f^{*}(\alpha)\rvert^{s-2h-1}+\lvert D(\alpha)\rvert^{s-2h-1}\big)$ and $F_{2}(\alpha)=\lvert D(\alpha)\rvert \lvert f^{*}(\alpha)\rvert^{s-1}.$ The integral over the major arcs for $F_{2}(\alpha)$ is bounded in the same way as in equation (\ref{ugu}), and by combining Lemmata \ref{lem6} and \ref{cota} with equation (\ref{oror}) we get
\begin{align*}\int_{\grM}F_{1}(\alpha) d\alpha&
\ll P^{3s-3k+\xi-1+\varepsilon}\sum_{q\leq P^{\xi}}S_{s-2h-1}^{*}(q)+P^{3s-3k+(s-2h+1)\xi-s+2h}\sum_{q\leq P^{\xi}}q^{\varepsilon-1}.
\end{align*}
Using the fact that $\xi<1-1/(s-2h+1)$ and Lemma \ref{cor4} we obtain that the previous integral is $O(P^{3s-3k-\delta}).$ Therefore, by the preceding discussion, the argument following (\ref{ugu}) and Propositions \ref{prop66} and \ref{prop24} then the conclusion of Theorem \ref{thm9.1} holds for the values of $s$ in Table \ref{chart2}.

\emph{Acknowledgements}: The author's work was supported in part by a European Research Council Advanced
Grant under the European Union’s Horizon 2020 research and innovation programme via grant agreement No. 695223 during his studies at the University of Bristol. It was completed while the author was visiting Purdue University under Trevor Wooley's supervision. The author would like to thank him for his guidance and helpful comments, the anonymous referees for useful remarks and both the University of Bristol and Purdue University for their support and hospitality.

\end{document}